\newcommand{\Ind}	{\operatorname{Ind}}
\newcommand{\scirc}{\raise2pt\hbox{${}_\circ$}}
\newcommand{\cL}[1]{\mathcal{L}^{\textnormal{conf}}_{#1}}
\newcommand{\hL}[1]{\mathcal{L}^{\textnormal{hol}}_{#1}}
\newcommand{\hpi}[1]{\pi^{\textnormal{hol}}_{#1}} 
\newcommand{\aRC}[2]{\textnormal{RC}^a_{#1, #2}} 
\newcommand{\adRC}[2]{\mathcal{RC}^a_{#1, #2}} 
\newcommand{\Rest}[1]{\textnormal{Rest}_{#1}} 
\newcommand{\KT}[2]
           {\widetilde{K}_{{#1},{#2}}^{\mathbb{T}}}
    \let\@fnsymbol\@alph
\newtheorem{thmalph}{Theorem}
\newtheorem{theorem}{Theorem}[section]
\newtheorem{proposition}[theorem]{Proposition}
\newtheorem{lemma}[theorem]{Lemma}
\theoremstyle{remark}
\newtheorem{remark}[theorem]{Remark}
\theoremstyle{definition}
\newtheorem{example}[theorem]{Example}
\newtheorem{problem}[theorem]{Problem}
\newtheorem{thmalpha}{Theorem} 
\newtheorem{thmpalpha}{Theorem} 
\newtheorem{thmppalpha}{Theorem} 
\newtheorem{thmpppalpha}{Theorem} 
\newtheorem{question}[thmalpha]{Question}
\newtheorem{pquestion}[thmpalpha]{Question}
\newtheorem{ppquestion}[thmppalpha]{Question}
\newtheorem{pppquestion}[thmpppalpha]{Question}
\numberwithin{equation}{section}
\begin{document}
\title{Vector-valued covariant differential operators
for the M\"obius transformation}

\author{Toshiyuki KOBAYASHI,\footnote{Kavli IPMU (WPI)
and Graduate School of Mathematical Sciences, 
The University of Tokyo, 3-8-1 Komaba, Meguro, Tokyo, 153-8914 Japan;
E-mail address: \texttt{toshi@ms.u-tokyo.ac.jp},} \;
Toshihisa KUBO,\thanks{Graduate School of Mathematical Sciences, 
The University of Tokyo, 3-8-1 Komaba, Meguro, Tokyo, 153-8914 Japan;
E-mail address: \texttt{toskubo@ms.u-tokyo.ac.jp},}\;
Michael PEVZNER\thanks{
Laboratoire de Math\'ematiques de Reims,
Universit\'e de Reims-Champagne-Ardenne,
FR 3399 CNRS, F-51687, Reims, France;
E-mail address: \texttt{pevzner@univ-reims.fr}.}}

\date{} 

\maketitle

\begin{abstract}
We obtain a family of functional identities satisfied by vector-valued
functions of two variables and their geometric inversions. For this we introduce particular differential operators of arbitrary order
attached to Gegenbauer polynomials. These differential operators are
symmetry breaking for the pair of Lie groups $(SL(2,\mathbb C), SL(2,\mathbb R))$ that arise from conformal geometry.

\end{abstract}

\medskip
\noindent
\textit{2010 MSC:}
Primary
          22E46; 
Secondary 
33C45, 53C35

\medskip
\noindent
\textit{%
Key words and phrases\/}: 
branching law, reductive Lie group,
symmetry breaking,
conformal geometry,
Verma module, F-method,
time reversal operator.

\setcounter{tocdepth}{2}
\tableofcontents

\section{A family of vector-valued functional identities}\label{sec:1}

Given a pair of functions $f$, $g$ on $\mathbb{R}^2\backslash \{(0, 0)\}$, 
we consider 
a $\mathbb{C}^2$-valued function 
$\vec{F}:= \begin{pmatrix}f \\g \end{pmatrix}$.
Define its ``twisted inversion" 
$ \vec{F}_\lambda$
with parameter $\lambda \in \mathbb{C}$ by
\begin{equation}\label{eqn:invF}
\vec{F}^\vee_\lambda(r\cos \theta, r\sin \theta)
:=r^{-2\lambda} 
\begin{pmatrix}
\cos 2\theta & -\sin 2\theta\\
\sin 2\theta & \cos 2\theta
\end{pmatrix}
\vec{F}\bigg(\frac{-\cos\theta}{r}, \frac{\sin\theta}{r} \bigg).
\end{equation}
Clearly, $\vec{F} \mapsto \vec{F}^\vee_\lambda$
is involutive, namely, 
$(\vec{F}^{\vee}_\lambda)^\vee_\lambda
= \vec{F}$.

\vskip 1pc
A pair of differential operators $\mathcal{D}_1$, 
$\mathcal{D}_2$ on $\mathbb{R}^2$
yields a linear map
\begin{equation*}
\mathcal{D}: C^\infty(\mathbb{R}^2) \oplus C^\infty(\mathbb{R}^2)
\to C^\infty(\mathbb{R}),
\quad (f,g) \mapsto (\mathcal{D}_1f)(x,0) + (\mathcal{D}_2g)(x,0).
\end{equation*}
We write
\begin{equation*}
\mathcal{D}:=\Rest{y=0} \circ (\mathcal{D}_1, \mathcal{D}_2).
\end{equation*}

Our main concern in this article is the following:

\begin{question} \label{quest:A}
(1) For which parameters $\lambda, \nu \in \mathbb{C}$,
do there exist differential operators $\mathcal{D}_1$ 
and $\mathcal{D}_2$ on $\mathbb{R}^2$
with the following properties? 
\begin{itemize}
\item $\mathcal{D}_1$ and $\mathcal{D}_2$ have constant coefficients.
\item For any $\vec{F} \in C^\infty(\mathbb{R}^2) \oplus C^\infty(\mathbb{R}^2)$,
the functional identity
\end{itemize}
\begin{equation}\label{eqn:A}
(\mathcal{D} \vec{F}^\vee_\lambda)(x) 
= |x|^{-2\nu}(\mathcal D\vec{F})\left(-\frac{1}{x}\right),
\quad
\text{for $x \in \mathbb{R}^{\times}$} \tag{$\mathcal{M}_{\lambda,\nu}$}
\end{equation}
holds, where 
$\mathcal{D} = \Rest{y=0} \circ 
(\mathcal{D}_1, \mathcal{D}_2)$.
\vskip 0.05in

\noindent
(2) Find an explicit formula of such 
$\mathcal{D} \equiv \mathcal{D}_{\lambda, \nu}$
if exists.

\end{question}

Our motivation will be explained in Section \ref{sec:2} by giving
three equivalent formulations of Question \ref{quest:A}.
Here are some examples of the 
operators $\mathcal{D}_{\lambda, \nu}$ satisfying \eqref{eqn:A}.

\begin{example} \label{ex:QA}
(0) $\nu = \lambda$:
\begin{equation*}
\mathcal{D}_{\lambda, \nu}:= \Rest{y=0} \circ 
\left(\text{id}, 0 \right),
\end{equation*}
namely,
\begin{equation*}
\mathcal{D}_{\lambda, \nu}\begin{pmatrix} f \\ g \end{pmatrix}(x)
=f(x,0)
\end{equation*}
satisfies \eqref{eqn:A} for $\nu = \lambda$.\\

\noindent
(1) $\nu = \lambda+1$:
\begin{equation*}
\mathcal{D}_{\lambda, \nu}:= \Rest{y=0} \circ 
\left(\frac{\partial}{\partial x}, \lambda \frac{\partial}{\partial y} \right),
\end{equation*}
namely,
\begin{equation*}
\mathcal{D}_{\lambda, \nu}\begin{pmatrix} f \\ g \end{pmatrix}(x)
=\frac{\partial f}{\partial x} (x, 0) + \lambda \frac{\partial g}{\partial y} (x,0)
\end{equation*}
satisfies \eqref{eqn:A} for $\nu = \lambda+1$.\\

\noindent
(2) $\nu = \lambda+2$:
\begin{equation*}
\mathcal{D}_{\lambda, \nu}:= \Rest{y=0} \circ 
\bigg(2 (2\lambda+1) \frac{\partial^2}{\partial x \partial y}, 
(\lambda-1)\frac{\partial^2}{\partial x^2} + (\lambda+1)(2\lambda + 1) 
\frac{\partial^2}{\partial y^2} \bigg),
\end{equation*}
namely,
\begin{equation*}
\mathcal{D}_{\lambda, \nu}\begin{pmatrix} f \\ g \end{pmatrix} (x)=
2(2\lambda+1) \frac{\partial^2 f}{\partial x \partial y }(x, 0) + 
(\lambda-1) \frac{\partial^2 g}{\partial x^2}(x, 0) + 
(\lambda+1)(2\lambda+1)\frac{\partial^2 g}{\partial y^2}(x, 0)
\end{equation*}
satisfies \eqref{eqn:A} for $\nu=\lambda+2$.
\end{example}

Given $\mathcal{D} = \Rest{y=0} \circ 
(\mathcal{D}_1, \mathcal{D}_2)$, define
\begin{equation}\label{eqn:Ddual}
\mathcal{D}^{\vee}:=\Rest{y=0} 
\circ (-\mathcal{D}_2, \mathcal{D}_1). 
\end{equation}
Clearly, $\mathcal{D}^\vee$ is determined only by
$\mathcal{D}$, and is independent of the choice of 
$\mathcal{D}_1$ and $\mathcal{D}_2$.
Proposition \ref{prop:Ddual} below
shows that the map $\mathcal{D} \mapsto \mathcal{D}^\vee$
is an automorphism of the set of 
the operators $\mathcal{D}$ such that 
\eqref{eqn:A} is satisfied.

\begin{proposition}\label{prop:Ddual}
If $\mathcal{D}$ satisfies \eqref{eqn:A} for all $\vec{F}$,
so does $\mathcal{D}^{\vee}$.
\end{proposition}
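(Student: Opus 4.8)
The plan is to realize the passage $\mathcal{D}\mapsto\mathcal{D}^\vee$ as the precomposition of $\mathcal{D}$ with a single fixed rotation of the target $\mathbb{C}^2$, and then to exploit that this rotation commutes with the twisted inversion \eqref{eqn:invF} simply because the twisting matrix there is itself a rotation. Concretely, I would introduce the constant matrix $J:=\begin{pmatrix}0&1\\-1&0\end{pmatrix}$, acting on $\mathbb{C}^2$-valued functions by left multiplication, $(J\vec{F})(x,y):=J\,\vec{F}(x,y)$, and reduce the statement to two elementary facts about $J$.

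The first fact is the purely algebraic identity $\mathcal{D}^\vee\vec{F}=\mathcal{D}(J\vec{F})$ for every $\vec{F}$. This is immediate from \eqref{eqn:Ddual}: writing $\vec{F}=\begin{pmatrix}f\\g\end{pmatrix}$ one has $J\vec{F}=\begin{pmatrix}g\\-f\end{pmatrix}$, so $\mathcal{D}(J\vec{F})=\Rest{y=0}(\mathcal{D}_1 g-\mathcal{D}_2 f)$, which is exactly $\mathcal{D}^\vee\vec{F}=\Rest{y=0}(-\mathcal{D}_2 f+\mathcal{D}_1 g)$. In other words $\mathcal{D}^\vee=\mathcal{D}\circ J$ as operators; the only care needed here is the sign bookkeeping between $(-\mathcal{D}_2,\mathcal{D}_1)$ and the entries of $J$.

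The second, and conceptually central, fact is that the twisted inversion commutes with $J$, that is $(J\vec{F})^\vee_\lambda=J\bigl(\vec{F}^\vee_\lambda\bigr)$. Denoting by $R(2\theta)=\begin{pmatrix}\cos2\theta&-\sin2\theta\\\sin2\theta&\cos2\theta\end{pmatrix}$ the matrix in \eqref{eqn:invF} and evaluating both sides at $(r\cos\theta,r\sin\theta)$, the left-hand side equals $r^{-2\lambda}R(2\theta)\,J\,\vec{F}\bigl(-\tfrac{\cos\theta}{r},\tfrac{\sin\theta}{r}\bigr)$ while the right-hand side equals $r^{-2\lambda}J\,R(2\theta)\,\vec{F}\bigl(-\tfrac{\cos\theta}{r},\tfrac{\sin\theta}{r}\bigr)$. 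Since $R(2\theta)$ and $J$ both lie in the abelian group $SO(2)$ they commute, and the identity follows. This is the crux of the argument: the whole proposition rests on the observation that the twisting matrix of \eqref{eqn:invF} is a rotation, hence commutes with the $90^\circ$ rotation $J$.

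Finally I would chain the ingredients together. For an arbitrary $\vec{F}$, the hypothesis \eqref{eqn:A} applied to the function $J\vec{F}$ gives
\[
\mathcal{D}\bigl((J\vec{F})^\vee_\lambda\bigr)(x)=|x|^{-2\nu}\,\mathcal{D}(J\vec{F})\Bigl(-\tfrac{1}{x}\Bigr).
\]
Rewriting the left-hand side with the commutation relation $(J\vec{F})^\vee_\lambda=J(\vec{F}^\vee_\lambda)$ and then using $\mathcal{D}\circ J=\mathcal{D}^\vee$ on both sides turns this into
\[
\mathcal{D}^\vee\bigl(\vec{F}^\vee_\lambda\bigr)(x)=|x|^{-2\nu}\,\mathcal{D}^\vee\vec{F}\Bigl(-\tfrac{1}{x}\Bigr),
\]
which is precisely \eqref{eqn:A} for $\mathcal{D}^\vee$. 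I do not anticipate any genuine obstacle here: the argument reduces entirely to the abelianness of $SO(2)$, and the only delicate points are the sign conventions in the definition of $\mathcal{D}^\vee$ and keeping the order of the two matrix multiplications straight when verifying the commutation.
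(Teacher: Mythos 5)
Your proof is correct and is essentially the paper's own argument: the paper likewise writes $\mathcal{D}^\vee=\mathcal{D}\circ w^{-1}$ and ${}^\vee\!\vec{F}=w^{-1}\vec{F}$ with $w=\begin{pmatrix}0&-1\\1&0\end{pmatrix}$ (so your $J$ is exactly $w^{-1}$), uses that $w$ commutes with the rotation matrix in \eqref{eqn:invF}, and then applies the hypothesis \eqref{eqn:A} to the rotated function. No meaningful difference in approach.
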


\begin{proof}
For $\vec{F}=\begin{pmatrix} f \\ g \end{pmatrix}$,
we set 
${}^\vee \! \vec{F}: = 
\begin{pmatrix} g \\ -f\end{pmatrix}$.
Then we have
\begin{equation}\label{eqn:FF}
^{\vee\vee}\!\vec{F}
=-\vec{F},
\quad
\mathcal{D}\left({}^\vee\! \vec{F} \right) = 
(\mathcal{D}^\vee)\vec{F},
\quad
({}^\vee\!\vec{F})^\vee_\lambda
={}^\vee\!(\vec{F}^\vee_\lambda).
\end{equation}

To see this we note that 
$w :=\begin{pmatrix} 0 & -1 \\ 1 & 0 \end{pmatrix}$
commutes with 
$\begin{pmatrix} 
\cos 2\theta & -\sin 2\theta \\
\sin 2\theta & \cos 2\theta
\end{pmatrix}$
and that $\mathcal{D}^\vee$ and ${}^\vee\! \vec{F}$ are expressed as
$\mathcal{D}^\vee = \mathcal{D} w^{-1}$ and ${}^\vee \!\vec{F} 
= w^{-1}\vec{F}$,
respectively. Therefore,
\begin{align*}
\left( \mathcal{D}^\vee \vec{F}^\vee_\lambda\right)(x)
&= \mathcal{D} \left({}^\vee \!\vec{F}\right)^\vee_\lambda(x)\\
&=|x|^{-2\nu}\left( \mathcal{D}\; {}^\vee \!\vec{F}\right)
\bigg( -\frac{1}{x} \bigg)\\
&=|x|^{-2\nu}\left( \mathcal{D}^\vee\vec{F}\right)
\bigg( -\frac{1}{x} \bigg),
\end{align*}
where the passage from the first line to the second one is justified by the fact that ${}^\vee\! \vec{F}$ satisfies \eqref{eqn:A}.
\end{proof}

In order to answer Question \ref{quest:A}
for general $(\lambda, \nu)$,
we recall that the Gegenbauer polynomial or
ultraspherical polynomial
$C^\alpha_\ell(t)$ 
is a polynomial in one variable $t$ of degree $\ell$ given by
\begin{equation*}
C^\alpha_\ell (t) =
\sum_{k=0}^{[\frac{\ell}{2}]} (-1)^k 
\frac{\Gamma(\ell - k + \alpha)}{\Gamma(\alpha)\Gamma(\ell-2k + 1) k!}
(2t)^{\ell - 2k},
\end{equation*}
where $[s]$ denotes the greatest integer that does not exceed $s$.
Following \cite{KP}, we inflate $C^\alpha_\ell(t)$ to a polynomial of two variables by
\begin{equation}\label{eqn:Gtwo}
C^\alpha_\ell(s,t):= s^{\frac{\ell}{2}} C^\alpha_\ell 
\bigg(\frac{t}{\sqrt{s}}\bigg).
\end{equation}
By formally substituting $-\frac{\partial^2}{\partial x^2}$
and $\frac{\partial}{\partial y}$ to $s$ and $t$
in $C^\alpha_\ell(s,t)$, respectively, 
we obtain a homogeneous differential operator
$\mathcal{C^\alpha_\ell}:=
C^\alpha_\ell\left(-\frac{\partial^2}{\partial x^2}, \frac{\partial}{\partial y} \right)$
of order $\ell$ on 
$\mathbb{R}^2$.
Here are the first four operators:
\begin{enumerate}
\item[] $\mathcal{C}^\alpha_0=\text{id}$,
\item[] $\mathcal{C}^\alpha_1 = 2 \alpha \frac{\partial}{\partial y}$,
\item[] $\mathcal{C}^\alpha_2 = 
\alpha \left(-\frac{\partial^2}{\partial x^2}+ (\alpha + 1) \frac{\partial^2}{\partial y^2}\right)$,
\item[] $\mathcal{C}^\alpha_3 = \frac{2}{3} \alpha (\alpha +1)
\left( 3 \frac{\partial^3}{\partial x ^2 \partial y} + 2(\alpha + 2) \frac{\partial^3}{\partial y^3}\right)$.
\end{enumerate}

\begin{thmalph}\label{thm:A}
Suppose that $a:=\nu - \lambda$ is a non-negative integer.
For $a > 0$, we 
define the following pair of homogeneous differential operators
of order $a$ on $\mathbb{R}^2$ by
\begin{align*}
\mathcal{D}_1 &: = a (2\lambda+a -1) \frac{\partial}{\partial x}\circ 
\mathcal{C}^{\lambda+\frac{1}{2}}_{a-1}\\
\mathcal{D}_2 &:=\left(2\lambda^2 + 2(a-1)\lambda + a(a-1) \right)
\frac{\partial}{\partial y} \circ\mathcal{C}^{\lambda+\frac{1}{2}}_{a-1}\\
&\hskip 1.2in 
+(\lambda-1)(2\lambda+1) 
\left(\frac{\partial^2}{\partial x^2} + \frac{\partial^2}{\partial y^2}\right)
\circ\mathcal{C}^{\lambda+\frac{3}{2}}_{a-2}.
\end{align*}
For $a=0$, we set
\begin{equation*}
\mathcal{D}_1 := \emph{id}, \quad \mathcal{D}_2:=0.
\end{equation*}
Then $\mathcal{D}: = \emph{Rest}_{y = 0}
\circ(\mathcal{D}_1, \mathcal{D}_2)$ 
and $\mathcal{D}^\vee := \Rest{y=0} \circ 
(\mathcal{D}_2, -\mathcal{D}_1)$ satisfy
the functional identity \eqref{eqn:A}.
Moreover, when $2\lambda \notin \{0,-1,-2, \cdots\}$,
there exists 
a non-trivial solution to \eqref{eqn:A} 
only if $\nu - \lambda$ is a non-negative integer.
and any differential operator satisfying
\eqref{eqn:A} is a linear combination of 
$\mathcal{D}$ and $\mathcal{D}^\vee$.
\end{thmalph}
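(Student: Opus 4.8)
The plan is to read the functional identity \eqref{eqn:A} as the statement that $\mathcal{D}$ intertwines two representations under the single ``inversion'' element $w=\left(\begin{smallmatrix}0&-1\\1&0\end{smallmatrix}\right)$, and then to classify all such $\mathcal{D}$ by the F-method. First I would note that $w$ acts on $\mathbb{R}\cup\{\infty\}$ by $x\mapsto-1/x$ and on $\mathbb{C}\cup\{\infty\}=\mathbb{R}^2\cup\{\infty\}$ by $z\mapsto-1/z$, and that the automorphy factor of the latter is exactly the scalar $r^{-2\lambda}$ together with the rotation-by-$2\theta$ matrix in \eqref{eqn:invF}; thus $\vec{F}\mapsto\vec{F}^\vee_\lambda$ is the $w$-action in a vector-valued principal series $I(\lambda)$ of $SL(2,\mathbb{C})$, whose fibre $\mathbb{C}^2$ carries a two-dimensional representation of the Levi subgroup (the $SO(2)$-weights $\pm2$ being visible in the twist), while $|x|^{-2\nu}(\,\cdot\,)(-1/x)$ is the $w$-action in a scalar principal series $J(\nu)$ of $SL(2,\mathbb{R})$. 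Because $\mathcal{D}_1,\mathcal{D}_2$ have constant coefficients they automatically commute with the translation subgroup $N'$, and since $N'$ together with $w$ already generate $SL(2,\mathbb{R})$, the identity \eqref{eqn:A} is equivalent to $\mathcal{D}$ being a differential symmetry breaking operator $I(\lambda)|_{SL(2,\mathbb{R})}\to J(\nu)$. In particular the induced covariance under the diagonal subgroup $A'$ is a consequence, and a direct scaling computation then forces $\mathcal{D}$ to be homogeneous of order $\nu-\lambda$; this already gives the necessity that $a=\nu-\lambda$ be a non-negative integer, independently of any genericity assumption.

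The classification I would carry out by the F-method. Applying the algebraic Fourier transform in the $N$-direction replaces the constant-coefficient pair $(\mathcal{D}_1,\mathcal{D}_2)$ by its $\mathbb{C}^2$-valued symbol $(p_1,p_2)$, a pair of polynomials in the variables $(\zeta,\xi)$ dual to $(x,y)$, and converts the remaining infinitesimal covariance under $\overline{\mathfrak{n}}'$ into a coupled system of second-order differential equations on $(p_1,p_2)$, the so-called \emph{F-system}. Homogeneity and the weight-$\pm2$ structure let me reduce this two-variable system to a single variable, exactly along the inflation \eqref{eqn:Gtwo} of $C^\alpha_\ell$, so that the two components are governed by Gegenbauer ordinary differential equations with the shifted parameters $\lambda+\tfrac12$ and $\lambda+\tfrac32$. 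Solving the reduced equations I expect to obtain a two-dimensional space of polynomial solutions spanned by $C^{\lambda+\frac12}_{a-1}$ and $C^{\lambda+\frac32}_{a-2}$; reading these symbols back as differential operators reproduces the operators $\mathcal{D}_1,\mathcal{D}_2$ displayed in the statement.

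For the existence half I would then check that the displayed $(\mathcal{D}_1,\mathcal{D}_2)$ really solve the F-system, using the Gegenbauer differential equation together with the standard three-term and contiguity relations; alternatively one can substitute the explicit operators into \eqref{eqn:A} and collapse the resulting finite sums by means of the generating function
\[
\sum_{\ell=0}^{\infty}C^\alpha_\ell(t)\,z^\ell=(1-2tz+z^2)^{-\alpha},
\]
cross-checking the low-order cases against Example \ref{ex:QA}. The companion operator $\mathcal{D}^\vee$ then satisfies \eqref{eqn:A} for free, by Proposition \ref{prop:Ddual}. For the uniqueness half I would invoke the genericity hypothesis: when $2\lambda\notin\{0,-1,-2,\dots\}$ the Gegenbauer parameters $\lambda+\tfrac12$ and $\lambda+\tfrac32$ are nondegenerate and $I(\lambda)$ is irreducible, so the indicial analysis of the F-system shows that its polynomial solution space is \emph{exactly} two-dimensional; hence every operator solving \eqref{eqn:A} is a linear combination of $\mathcal{D}$ and $\mathcal{D}^\vee$. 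Equivalently, the two-dimensionality of the space of differential symmetry breaking operators and its two generators may be extracted from the duality of the F-method between differential symmetry breaking operators and $(\mathfrak{sl}(2,\mathbb{R}),\overline{P}')$-homomorphisms of the associated generalized Verma modules.

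The hard part will be the precise analysis of the F-system in the vector-valued setting: deriving the coupled system explicitly, decoupling it into genuine Gegenbauer equations with the parameters $\lambda\pm\tfrac12$, and then running the indicial computation that pins the solution space to dimension two. The genericity $2\lambda\notin\mathbb{Z}_{\le0}$ enters exactly here, to keep the leading coefficients of the underlying recurrence nonzero; without it one of the factors $C^{\lambda+\frac12}_{a-1}$, $C^{\lambda+\frac32}_{a-2}$ degenerates and the dimension count can change. Finally, the boundary cases $a=0$ and $a=1$, where one or both Gegenbauer factors are absent, will have to be treated by separate bookkeeping to match the normalization in the statement.
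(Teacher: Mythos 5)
Your route is genuinely different from the paper's, and the parts of it you actually carry out are sound: the reformulation of \eqref{eqn:A} as $SL(2,\mathbb{R})$-covariance (constant coefficients give covariance under translations, and $w$ together with the translations generates $SL(2,\mathbb{R})$) is exactly the paper's Section \ref{sec:2}, and your scaling argument under the diagonal subgroup does give the necessity of $a=\nu-\lambda\in\mathbb{N}$ --- indeed without any genericity hypothesis, whereas the paper only obtains this through the Verma-module branching law (Propositions \ref{prop:Verma} and \ref{prop:Verm}). For the rest, the paper does not use the F-method at all: it proves existence by a ``holomorphic trick'', realizing $(\mathcal{D}_1,\mathcal{D}_2)$ as the real and imaginary parts of the Rankin--Cohen bracket $\adRC{\lambda+1}{\lambda-1}$ pulled back under the totally real embedding $z\mapsto(z,\bar z)$ (Proposition \ref{prop:htrick}, Lemma \ref{lem:RCreal}), deducing covariance on a dense subspace from \eqref{eqn:RCcov} together with the Stone--Weierstrass theorem, identifying the result with the Gegenbauer expression via the Jacobi--Gegenbauer identity of Proposition \ref{prop:Jacob}, and finally extending from $\lambda\in\mathbb{Z}$ to all $\lambda\in\mathbb{C}$ by Zariski density; uniqueness is the Verma-module dimension count via \eqref{eqn:KP}. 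The paper itself remarks, after Proposition \ref{prop:Jacob}, that the vector-bundle F-method you propose is an alternative deferred to a subsequent paper, so your plan is viable in principle.

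The problem is that, as written, your proposal has a genuine gap: every step that carries the actual content of the theorem is postponed rather than done. You never derive the F-system in this vector-valued setting, never show that it decouples along the inflation \eqref{eqn:Gtwo} into Gegenbauer equations with parameters $\lambda+\frac{1}{2}$ and $\lambda+\frac{3}{2}$, never verify that the displayed $(\mathcal{D}_1,\mathcal{D}_2)$ solves it (or solves \eqref{eqn:A} directly), and never run the indicial computation bounding the solution space by two --- these are precisely the places where you write ``I expect to obtain'' and ``the hard part will be''. The decoupling claim in particular is not routine: the answer is not that one component is $C^{\lambda+\frac{1}{2}}_{a-1}$ and the other $C^{\lambda+\frac{3}{2}}_{a-2}$, but that $\mathcal{D}_2$ is the specific combination $\left(2\lambda^2+2(a-1)\lambda+a(a-1)\right)\frac{\partial}{\partial y}\circ\mathcal{C}^{\lambda+\frac{1}{2}}_{a-1}+(\lambda-1)(2\lambda+1)\left(\frac{\partial^2}{\partial x^2}+\frac{\partial^2}{\partial y^2}\right)\circ\mathcal{C}^{\lambda+\frac{3}{2}}_{a-2}$, and pinning down exactly these coefficients is what forced the paper to prove the (apparently new) identity of Proposition \ref{prop:Jacob}. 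Until the coupled system is written down and solved, or the substitution of the explicit operators into \eqref{eqn:A} is actually performed, what you have is a credible strategy, not a proof.
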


\textbf{Notation:} $\mathbb{N} := \{0, 1, 2, \ldots\}$\\
\phantom{a} \hskip 0.96in $\mathbb{N}_+ := \{1, 2, \ldots\}$

\section{Three equivalent formulations}\label{sec:2}

Question \ref{quest:A} arises from various disciplines
of mathematics. In this section we describe it in three equivalent ways.

\subsection{Covariance of $SL(2,\mathbb{R})$ for vector-valued functions}

For  $\lambda \in \mathbb{C}$, we define a group homomorphism
\begin{equation}\label{eqn:Fhl}
\psi_\lambda: \mathbb{C}^\times \to GL(2, \mathbb{R}),
\quad z=re^{i\theta} \mapsto r^\lambda
\begin{pmatrix} \cos \theta & \sin \theta \\ -\sin \theta & \cos \theta \end{pmatrix}.
\end{equation}
For a $\mathbb{C}^2$-valued function $\vec{F}$ on 
$\mathbb{C} \simeq \mathbb{R}^2$, we set 
\begin{equation*}
\vec{F}^h_\lambda(z) : = 
\psi_\lambda\left( (cz+d)^{-2} \right) \vec{F}
\bigg(\frac{az + b}{cz+d}\bigg)
\end{equation*}
for 
$\lambda \in \mathbb{C}$,
$h^{-1} = \begin{pmatrix} a & b \\ c & d \end{pmatrix} \in SL(2,\mathbb{R})$,
and $z \in \mathbb{C}$ such that $cz+d \neq 0$.

\begin{pquestion}\label{quest:A1}
(1) 
Determine complex parameters $\lambda, \nu \in \mathbb{C}$ 
for which there exist differential operators 
$\mathcal{D}_1$ and $\mathcal{D}_2$ on $\mathbb{R}^2$
with the following property:
$\mathcal{D}= \Rest{y=0} \circ 
(\mathcal{D}_1, \mathcal{D}_2)$ satisfies
\begin{equation}\label{eqn:A2}
(\mathcal{D}\vec{F}^h_\lambda)(x)= |cx+d|^{-2\nu}
(\mathcal{D}\vec{F})\bigg(\frac{ax+b}{cx+d}\bigg)
\end{equation}
for all $\vec{F} \in 
C^\infty(\mathbb{C}) \oplus C^\infty(\mathbb{C}),
h^{-1} = \begin{pmatrix} a & b \\ c & d \end{pmatrix} \in SL(2, \mathbb{R})$,
and $x \in \mathbb{R} \setminus \{-\frac{d}{c}\}$.\\

\noindent
(2) Find an explicit formula of such $\mathcal{D} \equiv 
\mathcal{D}_{\lambda, \nu}$.

\end{pquestion}

The equivalence between Questions \ref{quest:A} and \ref{quest:A1}
follows from the following three observations:
\begin{itemize}
\item The functional identity \eqref{eqn:A2}
for $h = \begin{pmatrix}1 & t \\ 0 & 1 \end{pmatrix} $ 
$(t \in \mathbb{R})$ implies that 
$\mathcal{D}=\Rest{y=0} \circ 
(\mathcal{D}_1, \mathcal{D}_2)$ is a translation invariant operator.
Therefore, we can take $\mathcal{D}_1$ and $\mathcal{D}_2$ to have constant coefficients.

\item $\vec{F}^\vee_\lambda = \vec{F}^w_\lambda$.

\item The group $SL(2,\mathbb{R})$ is generated by 
$w$ and  
$\left\{\begin{pmatrix} 1 & t \\ 0 & 1 \end{pmatrix}: t \in \mathbb{R}\right\}$.

\end{itemize}

\subsection{Conformally covariant differential operators}\label{subsec:conf}

Let $X$ be a smooth manifold equipped with a Riemannian
metric $g$. Suppose that a group $G$ acts on $X$ by the map
$G \times X \to X$, $(h, x) \mapsto h \cdot x$. 
This action is called \emph{conformal} if there is a positive-valued
smooth function (\emph{conformal factor}) $\Omega$ on $G \times X$
such that
\begin{equation*}
h^*(g_{h\cdot x}) = \Omega(h,x)^2 g_x 
\quad \text{for any $h \in G$ and $x \in X$.}
\end{equation*}

Given $\lambda \in \mathbb{C}$, we define a $G$-equivariant
line bundle $\mathcal{L}_\lambda\equiv \cL{\lambda}$ 
over $X$ by letting $G$
act on the direct product $X \times \mathbb{C}$ by 
$(x, u) \mapsto (h\cdot x, \Omega(h, x)^{-\lambda}u)$
for $h \in G$. Then we have a natural action of $G$ on the 
vector space $\mathcal{E}_\lambda(X) := C^\infty(X, \mathcal{L}_\lambda)$
consisting of smooth sections for $\mathcal{L}_\lambda$.
Since $\mathcal{L}_\lambda \to X$ is topologically a trivial bundle,
we may identify $\mathcal{E}_\lambda(X)$ with $C^\infty(X)$, and 
corresponding $G$-action on $C^\infty(X)$ is given as the multiplier
representation $\varpi_\lambda \equiv \varpi_\lambda^X$:
\begin{equation*}
\left(\varpi_\lambda(h)f\right)(x)
=\Omega(h^{-1},x)^\lambda f(h^{-1}\cdot x)
\quad \text{for $h \in G$ and $f \in C^\infty(X)$.}
\end{equation*}
See \cite{xkors} for the basic properties of the representation
$(\varpi_\lambda, C^\infty(X))$. 

\begin{example} \label{ex:SL2}
We endow $\mathbb{P}^1\mathbb{C} \simeq \mathbb{C}\cup \{\infty\}$ 
with a Riemannian metric $g$ 
via the stereographic projection of the unit sphere $S^2$:
\begin{equation*}
\mathbb{R}^3 \supset S^2 \stackrel{\sim}{\to} \mathbb{C}\cup \{\infty\},
\quad
(p,q,r) \mapsto \frac{p+\sqrt{-1}q}{1+r}.
\end{equation*}
Then $g(u,v) = \frac{4}{(1+|z|^2)^2}(u,v)_{\mathbb{R}^2}$
for $u, v \in T_z\mathbb{C} \simeq \mathbb{R}^2$,
and the M\"obius transformation, defined by
\begin{equation*}
\mathbb{P}^1\mathbb{C} \to \mathbb{P}^1\mathbb{C}, \quad 
z \mapsto g\cdot z =\frac{az+b}{cz+d} 
\quad
\text{for $g = \begin{pmatrix} a & b \\ c&d\end{pmatrix} \in 
SL(2,\mathbb{C})$},
\end{equation*}
is conformal with conformal factor
\begin{equation}\label{eqn:czd}
\Omega(g,z) = |cz+d|^{-2}.
\end{equation}
Therefore,
\begin{equation*}
\left( \varpi_\lambda(h)f\right)(z) 
= |cz+d|^{-2\lambda}
f\bigg(\frac{az+b}{cz+d}\bigg)
\quad
\text{for $h^{-1} = \begin{pmatrix}a & b \\ c & d \end{pmatrix}$.}
\end{equation*}
This is a (non-unitary) spherical principal series representation
$\Ind_{B_\mathbb{C}}^{G_\mathbb{C}}(1\otimes \lambda \alpha \otimes 1)$
of $G_\mathbb{C}= SL(2,\mathbb{C})$, where $\alpha$ is the unique
positive restricted root which defines a Borel subgroup $B_\mathbb{C}$.
\end{example}

Let $\wedge^i T^*X$ be
the $i$-th exterior power of the cotangent
bundle $T^*X$
for $0\leq i \leq n$, where $n$ is the dimension of $X$.
Then sections $\omega$ for $\wedge^i T^*X$ are 
$i$-th differential forms on $X$, and $G$ acts on 
$\mathcal{E}^i(X) = C^\infty(X, \wedge^i T^*X)$
as the pull-back of differential forms:
\begin{equation*}
\varpi(h)\omega = (h^{-1})^*\omega
\quad
\text{for $\omega \in \mathcal{E}^i(X)$.}
\end{equation*}
More generally, the tensor bundle 
$\mathcal{L}_\lambda \otimes 
\wedge^i T^*X$ is also a $G$-equivariant vector bundle
over $X$, and we denote by $\varpi_{\lambda, i}^X$
the regular representation
of $G$ on the space of sections
\begin{equation*}
\mathcal{E}^i_\lambda(X):=
C^\infty(X, \mathcal{L}_\lambda \otimes 
\wedge^i T^*X).
\end{equation*}

By definition $\mathcal{E}^0_\lambda(X) = \mathcal{E}_\lambda(X)$.
In our normalization we have a natural $G$-isomorphism
\begin{equation*}
\mathcal{E}^0_n(X) \simeq \mathcal{E}^n_0(X),
\end{equation*}
if $X$ admits a $G$-invariant orientation.

Denote by $\text{Conf}(X)$ the full group of conformal
transformations of the Riemannian manifold 
$(X, g)$. Given a submanifold $Y$ of $X$, we define
a subgroup by 
\begin{equation*}
\text{Conf}(X; Y) := 
\{\varphi \in \text{Conf}(X) : \varphi(Y) = Y\}.
\end{equation*}
Then the induced action of 
$\text{Conf}(X;Y)$ on 
the Riemannian manifold $(Y, g|_Y)$
is again conformal.
We then consider the following problem.

\begin{problem}\label{prob:form}
(1) Given $0\leq i \leq \dim X$ and 
$0 \leq j \leq \dim Y$,
classify $(\lambda, \nu) \in \mathbb{C}^2$ such that 
there exists a non-zero local/non-local operator
\begin{equation*}
T: \mathcal{E}^i_\lambda(X) \to \mathcal{E}^j_\nu(Y)
\end{equation*}
satisfying
\begin{equation*}
\varpi_{\nu,j}^Y(h) \circ T 
= T \circ \varpi_{\lambda, i}^X(h)
\quad
\text{for all $h \in \text{Conf}(X;Y)$.}\\
\end{equation*}

\noindent
(2) Find explicit \text{formul\ae}  of the operators
$T \equiv T_{\lambda, \nu}^{i,j}$.
\end{problem}

The case $i = j = 0$ is a question that was raised 
in \cite[Problem 4.2]{Kob14a} as a geometric aspect of the 
branching problem for representations with respect to
the pair of groups $\text{Conf}(X) \supset \text{Conf}(X;Y)$.

As a special case, one may ask:

\begin{ppquestion}\label{quest:A2}
Solve Problem \ref{prob:form} for covariant differential operators
in the setting that $(X,Y) = (S^2, S^1)$ and $(i,j) = (1,0)$.
\end{ppquestion}

We note that, for $(X,Y) = (S^2, S^1)$,
there are natural homomorphisms
\begin{alignat*}{3}
   G_\mathbb{C}&:=SL(2,\mathbb{C})&& \rightarrow  &&\operatorname{Conf}(X) 
\\
    &\hphantom{mmmm}\cup &&     && \hphantom{mm}\cup
\\
   G_\mathbb{R}&:=SL(2,\mathbb{R})&&  \rightarrow &&
   \operatorname{Conf}(X;Y), 
\end{alignat*}
\noindent
and the images of $SL(2,\mathbb{C})$ and $SL(2,\mathbb{R})$
coincide with the identity component groups of 
$\operatorname{Conf}(X)\simeq O(3,1)$
and $\operatorname{Conf}(X;Y)$, respectively.
Question \ref{quest:A} is equivalent to Question \ref{quest:A2}
with $\operatorname{Conf}(X;Y)$ replaced by its identity component
$SO_0(2,1) \simeq SL(2,\mathbb{R})/\{\pm I\}$.
In fact, the differential operator
$\mathcal{D}=\Rest{y=0} \circ (\mathcal{D}_1, \mathcal{D}_2)$ 
in Question \ref{quest:A} gives a $G_\mathbb{R}$-equivariant
differential operator
\begin{equation*}
\mathcal{E}^1_{\lambda-1}(S^2) 
\to \mathcal{E}^0_\nu(S^1) \equiv \mathcal{E}_\nu(S^1)
\end{equation*}
in our normalization, which takes the form
\begin{equation*}
\mathcal{E}^1(\mathbb{R}^2) \to C^\infty(\mathbb{R}),
\quad fdx + gdy \mapsto 
(\mathcal{D}_1 f) (x, 0) + (\mathcal{D}_2g)(x,0)
\end{equation*}
in the flat coordinates via the stereographic projection.

\subsection{Branching laws of Verma modules}\label{subsec:2.3}

Let $\mathfrak{g} = \mathfrak{sl}(2, \mathbb{C})$, and $\mathfrak{b}$ 
a Borel subalgebra consisting of lower 
triangular matrices in $\mathfrak{g}$.
For $\lambda \in \mathbb{C}$, we define a character of $\mathfrak{b}$,
to be denoted by $\mathbb{C}_\lambda$, as
\begin{equation*}
\mathfrak{b} \to \mathbb{C}, 
\quad
\begin{pmatrix}
-x & 0 \\ y & x 
\end{pmatrix}
\mapsto \lambda x.
\end{equation*}
If $\lambda \in \mathbb{Z}$ then
$\mathbb{C}_\lambda$ is the differential of the holomorphic
character $\chi_{\lambda, \lambda}$ of 
the Borel subgroup $B_\mathbb{C}$, which will be defined in
\eqref{eqn:chi} in Section \ref{subsec:3.1}.

We consider a $\mathfrak{g}$-module, referred to as a Verma module,
defined by
\begin{equation*}
M(\lambda) := U(\mathfrak{g}) \otimes_{U(\mathfrak{b})}\mathbb{C}_\lambda.
\end{equation*}
Then $\mathbf{1}_\lambda := 1 \otimes 1 \in M(\lambda)$
is a highest weight vector with weight $\lambda \in \mathbb{C}$, and it generates
$M(\lambda)$ as a $\mathfrak{g}$-module. The $\mathfrak{g}$-module
$M(\lambda)$ is irreducible if and only if $\lambda \notin \mathbb{N}$.

We consider the following algebraic question:

\begin{pppquestion}\label{quest:Verma}
(1) Classify $(\mu, \lambda_1, \lambda_2) \in \mathbb{C}^3$ such that
\begin{equation*}
\operatorname{Hom}_\mathfrak{g}
\left(M(\mu), M(\lambda_1) \otimes M(\lambda_2)\right)
\neq \{0\}.
\end{equation*}
(2) Find an explicit expression of $\varphi(\mathbf{1}_\mu)$
in $M(\lambda_1) \otimes M(\lambda_2)$ for any
$\varphi \in \operatorname{Hom}_\mathfrak{g}
\left( M(\mu), M(\lambda_1) \otimes M(\lambda_2)\right)$.
\end{pppquestion}

An answer to Question \ref{quest:Verma} is given as follows:

\begin{proposition}\label{prop:Verma}
If $\lambda_1 + \lambda_2 \notin \mathbb{N}$ then 
the tensor product $M(\lambda_1) \otimes M(\lambda_2)$
decomposes into the direct sum of Verma modules as follows:
\begin{equation*}
M(\lambda_1) \otimes M(\lambda_2) \simeq
\bigoplus_{a=0}^\infty M(\lambda_1 + \lambda_2 -2a).
\end{equation*}
\end{proposition}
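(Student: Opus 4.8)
The plan is to produce one singular (highest weight) vector in each relevant weight, use them to generate the summands, and then close the argument by a weight-multiplicity count. Write $X$ for the generator of the nilradical of $\mathfrak b$ (so that $X\cdot\mathbf 1_\lambda=0$, and $X$ raises weights by $2$) and $Y$ for the opposite nilpotent (lowering weights by $2$), so that $\{Y^k\mathbf 1_\lambda\}_{k\ge 0}$ is a weight basis of $M(\lambda)$ with weights $\lambda,\lambda-2,\lambda-4,\dots$. For $m\in\mathbb N$ let $W_m\subset M(\lambda_1)\otimes M(\lambda_2)$ be the weight space of weight $\lambda_1+\lambda_2-2m$; it is spanned by the vectors $Y^j\mathbf 1_{\lambda_1}\otimes Y^{m-j}\mathbf 1_{\lambda_2}$ with $0\le j\le m$, so $\dim W_m=m+1$, and these are the only nonzero weight spaces. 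Set $\mu_a:=\lambda_1+\lambda_2-2a$.

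First I would establish the existence of one singular vector in each relevant weight. Since $X$ acts on the tensor product as $X\otimes 1+1\otimes X$, it maps $W_a$ into $W_{a-1}$; as $\dim W_a=a+1>a=\dim W_{a-1}$, a rank--nullity count gives $\dim\ker(X|_{W_a})\ge 1$ for every $a\ge 0$. Choose $0\ne v_a\in W_a$ with $X v_a=0$ (for $a=0$ one may take $v_0=\mathbf 1_{\lambda_1}\otimes\mathbf 1_{\lambda_2}$). Then $v_a$ is a highest weight vector of weight $\mu_a$, so by the universal property of Verma modules it generates a submodule $N_a:=U(\mathfrak g)v_a$ that is a quotient of $M(\mu_a)$.

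Here the hypothesis enters: since $\lambda_1+\lambda_2\notin\mathbb N$, also $\mu_a=\lambda_1+\lambda_2-2a\notin\mathbb N$ for every $a\ge 0$, so by the irreducibility criterion recalled above each $M(\mu_a)$ is simple. Consequently the surjection $M(\mu_a)\twoheadrightarrow N_a$ has proper, hence zero, kernel, and $N_a\cong M(\mu_a)$. The modules $N_a$ are thus pairwise non-isomorphic simple submodules (their highest weights $\mu_a$ are distinct), so they lie in distinct isotypic components and their sum $\sum_a N_a$ is automatically direct.

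It remains to see that $\bigoplus_a N_a$ exhausts the tensor product, and this is where the multiplicity count closes the argument. The weight space of $\bigoplus_a N_a$ of weight $\lambda_1+\lambda_2-2m$ receives a one-dimensional contribution from $N_a$ precisely when $a\le m$, hence has dimension $m+1=\dim W_m$. Since $\bigoplus_a N_a\subseteq M(\lambda_1)\otimes M(\lambda_2)$ and the two sides have equal finite dimension in every weight, the inclusion is an equality, giving the asserted decomposition. The step to watch is $N_a\cong M(\mu_a)$: without $\lambda_1+\lambda_2\notin\mathbb N$ some $M(\mu_a)$ could be reducible, $N_a$ could be a proper quotient, and both the directness and the dimension bookkeeping would break down. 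Alternatively one could solve the two-term recursion for the coefficients of $v_a$ in the basis of $W_a$ and identify $v_a$ with the Gegenbauer-type singular vector of \cite{KP}; the hypothesis is exactly what keeps that recursion nondegenerate, but the soft argument above avoids the computation.
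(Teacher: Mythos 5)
Your proof is correct, and it takes a genuinely different route from the paper, which offers no self-contained argument: the paper simply cites \cite{K12}, where this decomposition arises as a special case of the abstract branching laws for (parabolic) Verma modules restricted to reductive symmetric pairs --- here the relevant pair is $(\mathfrak{g}\oplus\mathfrak{g},\operatorname{diag}\mathfrak{g})$, with $M(\lambda_1)\otimes M(\lambda_2)$ viewed as the restriction of the $\mathfrak{g}\oplus\mathfrak{g}$-Verma module $M(\lambda_1)\boxtimes M(\lambda_2)$ to the diagonal. Your argument is instead elementary and specific to $\mathfrak{sl}(2,\mathbb{C})$: rank--nullity applied to $X\colon W_a\to W_{a-1}$ produces a singular vector $v_a$ in each weight $\mu_a=\lambda_1+\lambda_2-2a$; the hypothesis $\lambda_1+\lambda_2\notin\mathbb{N}$, combined with the irreducibility criterion recorded in the paper, upgrades $N_a=U(\mathfrak{g})v_a$ from a quotient of $M(\mu_a)$ to a copy of it; pairwise non-isomorphism of the simple submodules $N_a$ makes $\sum_a N_a$ direct; and the weight-multiplicity count ($m+1$ on both sides in weight $\lambda_1+\lambda_2-2m$) forces exhaustion. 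All steps are sound, and you correctly isolate the one place where the hypothesis is indispensable. As for what each approach buys: yours is self-contained, uses nothing beyond facts already stated in Section \ref{subsec:2.3}, and makes transparent why the bookkeeping closes, but it leans on the one-dimensionality of weight spaces of $\mathfrak{sl}(2)$-Verma modules and so does not generalize as written to higher rank; the paper's citation buys uniformity and generality (arbitrary symmetric pairs, generalized Verma modules) and situates the proposition inside the same discretely decomposable branching framework on which the duality correspondence \eqref{eqn:KP} rests, at the cost of invoking substantial external machinery. Your closing alternative --- solving the recursion to identify $v_a$ explicitly --- is in fact the route closest in spirit to \cite{KP}, where the singular vectors are computed and found to be of Jacobi/Gegenbauer type; the soft argument you give deliberately avoids that computation.
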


For the proof, consult \cite{K12} for instance.
In fact, in \cite{K12}, one finds the (abstract) branching laws of (parabolic)
Verma modules in the general setting of the restriction with 
respect to symmetric pairs. By the duality theorem
(\cite{KOSS}, \cite[Theorem 2.7]{KP})
between 
differential \emph{symmetry breaking operators}
(covariant differential operators to submanifolds)
and (discretely decomposable) branching laws of 
Verma modules, we have the following one-to-one correspondence
\begin{align}
&\{\text{The differential operators 
$\mathcal{D}$ yielding the functional identity \eqref{eqn:A}}\} \nonumber\\
&\leftrightarrow
\operatorname{Hom}_\mathfrak{g}
\left(M(-2\nu), M(-\lambda-1)\otimes M(-\lambda+1) \right) \label{eqn:KP}\\
&\hskip 1.6in \oplus 
\operatorname{Hom}_\mathfrak{g}
\left(M(-2\nu), M(-\lambda+1)\otimes M(-\lambda-1) \right), \nonumber
\end{align}
because 
$T_o (G_\mathbb{C}/ B_\mathbb{C}) \otimes \mathbb{C}
\simeq \mathbb{C}_{-2} \boxtimes \mathbb{C} + \mathbb{C} \boxtimes \mathbb{C}_{-2}$
as $\mathfrak{b} \otimes \mathbb{C} \simeq \mathfrak{b} \oplus \mathfrak{b}$-modules.
Combining this with Proposition \ref{prop:Verma}, we obtain

\begin{proposition}\label{prop:Verm}
If $2\lambda \notin - \mathbb{N}$ then a non-zero differential operator
$\mathcal{D}$ satisfying \eqref{eqn:A} exists if and only if 
$\nu - \lambda \in \mathbb{N}$,
and the set of such differential operators 
forms a two-dimensional vector space.
\end{proposition}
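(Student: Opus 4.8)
The plan is to bypass the analysis entirely and reduce the statement to a finite count of homomorphisms between Verma modules, using the duality correspondence \eqref{eqn:KP} already recorded in the excerpt. That correspondence identifies the space of differential operators $\mathcal{D}$ satisfying \eqref{eqn:A} with
\[
\operatorname{Hom}_\mathfrak{g}\bigl(M(-2\nu), M(-\lambda-1)\otimes M(-\lambda+1)\bigr)
\oplus
\operatorname{Hom}_\mathfrak{g}\bigl(M(-2\nu), M(-\lambda+1)\otimes M(-\lambda-1)\bigr),
\]
so it suffices to compute the dimension of each summand. In both summands the two tensor factors have highest weights summing to $(-\lambda-1)+(-\lambda+1) = -2\lambda$, and the hypothesis $2\lambda \notin -\mathbb{N}$ is exactly the condition $-2\lambda \notin \mathbb{N}$ needed to invoke Proposition \ref{prop:Verma}.

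Applying Proposition \ref{prop:Verma} to each factor yields $M(-\lambda-1)\otimes M(-\lambda+1) \simeq \bigoplus_{a=0}^\infty M(-2\lambda - 2a)$, and likewise for the other ordering. Hence each summand becomes $\bigoplus_{a=0}^\infty \operatorname{Hom}_\mathfrak{g}(M(-2\nu), M(-2\lambda-2a))$, and I am reduced to the classical question of when there is a nonzero homomorphism between two Verma modules over $\mathfrak{g} = \mathfrak{sl}(2,\mathbb{C})$. The relevant standard fact, which I would recall by the elementary $\mathfrak{sl}_2$ argument, is that $\dim \operatorname{Hom}_\mathfrak{g}(M(\mu), M(\mu')) \leq 1$, with equality precisely when either $\mu = \mu'$, or $\mu' \in \mathbb{N}$ and $\mu = -\mu'-2$ (the latter corresponding to the singular vector $f^{\mu'+1}\mathbf{1}_{\mu'}$): a homomorphism out of $M(\mu)$ is the same datum as a weight-$\mu$ highest weight vector in the target, and inspection of the one-dimensional weight spaces of $M(\mu')$ shows such a vector exists only in these two cases.

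The decisive step — and the place where the hypothesis does its work — is to show that for our summands only the first alternative $\mu=\mu'$ can occur. Taking $\mu = -2\nu$ and $\mu' = -2\lambda - 2a$, the first alternative reads $\nu = \lambda + a$, while the second would require $-2\lambda - 2a \in \mathbb{N}$. But if $-2\lambda - 2a$ were a non-negative integer for some $a \in \mathbb{N}$, then $-2\lambda = (-2\lambda-2a)+2a$ would itself be a non-negative integer, contradicting $2\lambda \notin -\mathbb{N}$; thus the second alternative is vacuous. Consequently $\operatorname{Hom}_\mathfrak{g}(M(-2\nu), M(-2\lambda-2a))$ is nonzero, and then one-dimensional, exactly when $a = \nu - \lambda$, which occurs for some $a \in \mathbb{N}$ if and only if $\nu - \lambda \in \mathbb{N}$. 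In that case each of the two summands contributes exactly one dimension, so the total space is two-dimensional, while for $\nu - \lambda \notin \mathbb{N}$ both summands vanish. Via \eqref{eqn:KP} this gives precisely the claimed dichotomy. The only genuine obstacle is the careful bookkeeping of the weight conventions entering \eqref{eqn:KP} together with the exclusion of the singular-vector homomorphisms, and both are dispatched by the hypothesis as above.
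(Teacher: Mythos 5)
Your proof is correct and takes essentially the same route as the paper, which likewise obtains Proposition \ref{prop:Verm} by combining the duality correspondence \eqref{eqn:KP} with the decomposition of Proposition \ref{prop:Verma} and then counting homomorphisms between $\mathfrak{sl}(2,\mathbb{C})$ Verma modules. Your explicit check that the singular-vector alternative $-2\lambda-2a\in\mathbb{N}$ is excluded by the hypothesis $2\lambda\notin-\mathbb{N}$, leaving only the case $-2\nu=-2\lambda-2a$ with a one-dimensional contribution from each of the two summands, is exactly the bookkeeping the paper leaves implicit.
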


Owing to Proposition \ref{prop:Ddual}, we get the two-dimensional
solution space as the linear span of $\mathcal{D}$ and $\mathcal{D}^\vee$,
once we find a generic solution $\mathcal{D}$.

\section{Rankin--Cohen brackets}\label{sec:3}

As a preparation for the proof of Theorem \ref{quest:A},
we briefly review the Rankin--Cohen brackets, which
originated in number theory \cite{C75, EZ85, Ra56}.

\subsection{Homogeneous line bundles over $\mathbb{P}^1\mathbb{C}$}
\label{subsec:3.1}

First, we shall fix a normalization of three homogeneous line bundles over 
$X=\mathbb{P}^1\mathbb{C}$, namely, 
$\cL{\lambda}$ (Section \ref{sec:2}),
$\hL{\lambda}$, and $\mathcal{L}_{n,\lambda}$.

We define a Borel subgroup of $G_\mathbb{C} = SL(2, \mathbb{C})$ by
\begin{equation*}
B_\mathbb{C} := \bigg\{ \begin{pmatrix} a & 0 \\ c & \frac{1}{a} \end{pmatrix} :
a \in \mathbb{C}^\times, c \in \mathbb{C} \bigg\},
\end{equation*}
and identify $G_\mathbb{C} / B_\mathbb{C}$ with $X = \mathbb{P}^1 \mathbb{C}$
by $h B_\mathbb{C} \mapsto h \cdot 0$.

Given $n \in \mathbb{Z}$ and $\lambda \in \mathbb{C}$, 
we define a one-dimensional
representation of $B_\mathbb{C}$ by 
\begin{equation}\label{eqn:chi}
\chi_{n,\lambda} : B_\mathbb{C} \to \mathbb{C}^\times,
\quad
\begin{pmatrix}
\frac{1}{re^{i\theta}} & 0 \\
c & re^{i\theta}
\end{pmatrix}
\mapsto
e^{in\theta}r^\lambda,
\end{equation}
and a $G_\mathbb{C}$-equivariant line bundle 
$\mathcal{L}_{n,\lambda} = G_\mathbb{C} \times_{B_\mathbb{C}} \chi_{n,\lambda}$
as the set of equivalence classes of $G_\mathbb{C} \times \mathbb{C}$ given by
\begin{equation*}
(g, u) \sim (gb^{-1}, \chi_{n,\lambda}(b) u) 
\quad
\text{for some $b \in B_\mathbb{C}$.}
\end{equation*}
The conformal line bundle $\cL{\lambda}$ defined
in Section \ref{subsec:conf} amounts to 
$\mathcal{L}_{0, 2\lambda}$ by the formula \eqref{eqn:czd}.

On the other hand, if $\lambda = n \in \mathbb{Z}$ then $\chi_{\lambda,\lambda}$
is a holomorphic character of $B_\mathbb{C}$, and 
consequently, $\mathcal{L}_{\lambda, \lambda} \to X$ becomes
a holomorphic line bundle, which we denote by $\hL{\lambda}$.
The complexified cotangent bundle 
$(T^*X)\otimes \mathbb{C}$ splits into a Whitney sum of the 
holomorphic and anti-holomorphic cotangent bundle
$(T^*X)^{1,0} \oplus (T^* X)^{0,1}$,
which amounts to $\mathcal{L}_{2,2} \oplus \mathcal{L}_{-2,2}$.
In summary, we have:

\begin{lemma}\label{lem:Lpara}
We have the following isomorphisms of $G_\mathbb{C}$-equivariant
line bundles over $X \simeq \mathbb{P}^1\mathbb{C}$.
\begin{align*}
\hL{\lambda} &\simeq \mathcal{L}_{\lambda, \lambda} 
\quad \hskip 0.07in\text{for $\lambda \in \mathbb{Z}$,}\\
\cL{\lambda} &\simeq \mathcal{L}_{0, 2\lambda}
\quad \text{for $\lambda \in \mathbb{C}$,}\\
(T^*X)^{1, 0} &\simeq \mathcal{L}_{2,2},\\
(T^*X)^{0,1}  &\simeq \mathcal{L}_{-2,2}.
\end{align*}
\end{lemma}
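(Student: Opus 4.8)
The plan is to reduce all four assertions to one principle: a $G_\mathbb{C}$-equivariant line bundle over $G_\mathbb{C}/B_\mathbb{C}\simeq X$ is determined up to $G_\mathbb{C}$-isomorphism by the character of the isotropy group $B_\mathbb{C}$ on the fiber over the base point $o=eB_\mathbb{C}$, which under $hB_\mathbb{C}\mapsto h\cdot 0$ corresponds to $0\in\mathbb{C}$ (indeed $B_\mathbb{C}=\mathrm{Stab}_{G_\mathbb{C}}(0)$). For the associated bundle $\mathcal{L}_{n,\lambda}=G_\mathbb{C}\times_{B_\mathbb{C}}\chi_{n,\lambda}$ the relation $(g,u)\sim(gb^{-1},\chi_{n,\lambda}(b)u)$ gives $b\cdot[(e,u)]=[(e,\chi_{n,\lambda}(b)u)]$, so $B_\mathbb{C}$ acts on the fiber over $o$ by the scalar $\chi_{n,\lambda}(b)$. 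Hence each isomorphism reduces to computing one isotropy character on the left-hand side and matching it with the appropriate $\chi_{n,\lambda}$. Throughout I write a general $b\in B_\mathbb{C}$ as $b=\left(\begin{smallmatrix}a&0\\c&1/a\end{smallmatrix}\right)$ with $1/a=re^{i\theta}$, i.e. $a=r^{-1}e^{-i\theta}$.

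For $\cL{\lambda}$, the construction in Section~\ref{subsec:conf} makes $B_\mathbb{C}$ act on the fiber over $0$ by $u\mapsto\Omega(b,0)^{-\lambda}u$. By \eqref{eqn:czd} one has $\Omega(b,0)=|1/a|^{-2}=|a|^{2}$, so the isotropy character is $b\mapsto|a|^{-2\lambda}=r^{2\lambda}=\chi_{0,2\lambda}(b)$, which yields $\cL{\lambda}\simeq\mathcal{L}_{0,2\lambda}$. For $\hL{\lambda}$ with $\lambda\in\mathbb{Z}$ there is nothing to prove beyond the definition: I only check that $\chi_{\lambda,\lambda}(b)=e^{i\lambda\theta}r^{\lambda}=(1/a)^{\lambda}=a^{-\lambda}$ is independent of $c$ and holomorphic in $a$, so that $\chi_{\lambda,\lambda}$ is a holomorphic character of $B_\mathbb{C}$ and $\mathcal{L}_{\lambda,\lambda}$ is the holomorphic line bundle denoted $\hL{\lambda}$.

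For the two cotangent bundles I will compute the isotropy action on the tangent space and dualize. The M\"obius action $\phi_b\colon z\mapsto az/(cz+1/a)$ of $b$ has derivative $a^{2}$ at $z=0$; thus $B_\mathbb{C}$ acts on $(T_0X)^{1,0}$ by $a^{2}$ and on $(T_0X)^{0,1}$ by $\overline{a}^{2}$. Using the prescribed action $\varpi(h)\omega=(h^{-1})^*\omega$, the fiber over $o$ of $(T^*X)^{1,0}$ transforms by $\phi_{b^{-1}}'(0)=a^{-2}$ and that of $(T^*X)^{0,1}$ by $\overline{a}^{-2}$. In terms of $r,\theta$ these are $a^{-2}=r^{2}e^{2i\theta}=\chi_{2,2}(b)$ and $\overline{a}^{-2}=r^{2}e^{-2i\theta}=\chi_{-2,2}(b)$, giving $(T^*X)^{1,0}\simeq\mathcal{L}_{2,2}$ and $(T^*X)^{0,1}\simeq\mathcal{L}_{-2,2}$.

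The computations themselves are elementary substitutions; the only delicate point, and where I expect all the risk of error to lie, is the consistent bookkeeping of conventions, namely that the associated bundle $G_\mathbb{C}\times_{B_\mathbb{C}}\chi$ records the character $\chi$ itself rather than its inverse, and that the cotangent fiber carries the contragredient of the tangent isotropy representation, so that the derivative $a^{2}$ of the M\"obius map is inverted to $a^{-2}$. Once these signs are fixed consistently with the definition \eqref{eqn:chi} of $\chi_{n,\lambda}$, all four identifications follow at once.
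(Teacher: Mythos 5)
Your proof is correct and takes essentially the same route the paper does implicitly: the lemma is stated there as a summary of the preceding identifications (the definition of $\chi_{n,\lambda}$, the conformal factor \eqref{eqn:czd}, and the splitting of the complexified cotangent bundle), which is precisely the matching of isotropy characters of $B_\mathbb{C}$ at the base point that you carry out explicitly. Your individual computations --- $\Omega(b,0)=|a|^{2}$ giving the character $r^{2\lambda}=\chi_{0,2\lambda}(b)$, the definitional nature of $\hL{\lambda}\simeq\mathcal{L}_{\lambda,\lambda}$, and $\phi_{b^{-1}}'(0)=a^{-2}=r^{2}e^{2i\theta}=\chi_{2,2}(b)$ for the cotangent bundles --- are all consistent with the paper's conventions.
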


The line bundle $\mathcal{L}_{n,\lambda} \to X$ 
is $G_\mathbb{C}$-equivariant; thus, there is
the regular representation $\pi_{n,\lambda}$
of $G_\mathbb{C}$
on $C^\infty(X, \mathcal{L}_{n,\lambda})$.
This is called the (unnormalized, non-unitary) principal 
series representation of $G_\mathbb{C}$.
The restriction to the open Bruhat cell 
$\mathbb{C} \hookrightarrow X=\mathbb{C} \cup \{\infty\}$
yields an injection
$C^\infty(X, \mathcal{L}_{n,\lambda}) \hookrightarrow C^\infty(\mathbb{C})$,
on which $\pi_{n,\lambda}$ is given as a multiplier representation:
\begin{equation*}
\left( \pi_{n,\lambda}(h) F\right) (z) = 
\bigg(\frac{cz+d}{|cz+d|} \bigg)^{-n} 
|cz+d|^{-\lambda}
F\bigg(\frac{az+b}{cz+d}\bigg)
\quad
\text{for $h^{-1} = \begin{pmatrix} a & b \\ c & d \end{pmatrix}$}.
\end{equation*}
Comparing this with the conformal construction
of the representation $\varpi_\lambda$ in 
Example \ref{ex:SL2}, we have $\varpi_\lambda \simeq \pi_{0,2\lambda}$.

Similarly to the smooth line bundle $\mathcal{L}_{n,\lambda}$,
we consider holomorphic sections for the holomorphic line bundle
$\hL{\lambda}$. For this, let $D$ be a domain of $\mathbb{C}$ and 
$G$ a subgroup of $G_\mathbb{C}$, which leaves $D$ invariant.
Then we can define a representation, to be denoted by 
$\hpi{\lambda}$, of $G$ on the space 
$\mathcal{O}(D) \equiv \mathcal{O}(D, \hL{\lambda})$
of holomorphic sections, which is identified with a multiplier representation
\begin{equation*}
\left(\hpi{\lambda}(h) F \right)(z)
=(cz+d)^{-\lambda}
F\bigg(\frac{az+b}{cz+d}\bigg)
\quad
\text{for $F \in \mathcal{O}(D)$}.
\end{equation*}

\begin{example}
(1) $D = \{ z \in \mathbb{C} : |z| < 1\}$, $G = SU(1,1)$.\\
\noindent 
\phantom{} 
\hskip 1.08in 
(2) $D = \{ z \in \mathbb{C} : \text{Im} z > 0\}$, $G = SL(2, \mathbb{R})$.\\
\noindent
(For the application below we shall use the unit disc model.)
\end{example}

\subsection{Rankin--Cohen bidifferential operator}\label{subsec:RC}

Let $D$ be a domain in $\mathbb{C}$. For $a \in \mathbb{N}$
and $\lambda_1, \lambda_2 \in \mathbb{C}$, 
the bidifferential operator 
$\adRC{\lambda_1}{\lambda_2}: 
\mathcal{O}(D) \otimes \mathcal{O}(D) \to \mathcal{O}(D)$, 
referred to as the \emph{Rankin--Cohen bracket} \cite{C75, Ra56},
is defined by
\begin{equation*}
\adRC{\lambda_1}{\lambda_2}(f_1 \otimes f_2)(z)
:=
\sum_{\ell = 0}^a (-1)^\ell 
\binom{\lambda_1 + a - 1}{\ell}
\binom{\lambda_2 + a - 1}{a-\ell}
\frac{\partial^{a-\ell}f_1}{\partial z^{a-\ell}}(z)
\frac{\partial^{\ell}f_2}{\partial z^{\ell}}(z).
\end{equation*}

In the theory of automorphic forms, 
$\adRC{\lambda_1}{\lambda_2}$
yields a new holomorphic modular form
of weight $\lambda_1 + \lambda_2 + 2a$ out of two
holomorphic modular forms $f_1$ and $f_2$ of weights 
$\lambda_1$ and $\lambda_2$, respectively.

From the viewpoint of representation theory, 
$\adRC{\lambda_1}{\lambda_2}$
is an intertwining operator:
\begin{equation}\label{eqn:RCcov}
\hpi{\lambda_1 + \lambda_2 + 2a}(h)
\circ \adRC{\lambda_1}{\lambda_2}
=
\adRC{\lambda_1}{\lambda_2} \circ
\left( \hpi{\lambda_1}(h) \otimes \hpi{\lambda_2}(h)\right)
\end{equation}
for all $h \in G$.

The coefficients of the Rankin--Cohen brackets look
somewhat complicated.
Eicheler--Zagier \cite[Chapter 3]{EZ85}
found that they are related to those of a 
classical orthogonal polynomial.
A short proof for this fact is given by the 
\emph{F-method} in \cite{KP}.

To see the relation, we define a polynomial 
$\aRC{\lambda_1}{\lambda_2}(x,y)$
of two variables
$x$ and $y$ by
\begin{equation}\label{eqn:RCxy}
\aRC{\lambda_1}{\lambda_2}(x,y):=
\sum_{\ell=0}^a (-1)^\ell
\binom{\lambda_1 + a -1}{\ell}
\binom{\lambda_2 + a -1}{a-\ell}
x^{a-\ell}y^\ell,
\end{equation}
so that the Rankin--Cohen bidifferential operator
$\mathcal{RC}^a_{\lambda_1, \lambda_2}$ 
is given by
\begin{equation*}
\mathcal{RC}^a_{\lambda_1, \lambda_2}
=\Rest{z_1 = z_2 = z}\circ 
\aRC{\lambda_1}{\lambda_2}\left(
\frac{\partial}{\partial z_1}, \frac{\partial}{\partial z_2}
\right).
\end{equation*}
The polynomial $\aRC{\lambda_1}{\lambda_2}(x,y)$ is 
of  homogeneous degree $a$. Clearly we have:

\begin{lemma}\label{lem:RCdual}
$\aRC{\lambda_1}{\lambda_2} (x,y) = (-1)^a\aRC{\lambda_2}{\lambda_1}(y,x)$.
\end{lemma}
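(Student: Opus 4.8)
The plan is to prove the identity by a direct term-by-term comparison, starting from the defining formula \eqref{eqn:RCxy} and reindexing the summation. Since both sides are explicit polynomials of homogeneous degree $a$, it suffices to match coefficients, and the whole argument reduces to a single substitution in the index of summation.

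First I would write out the right-hand side directly from the definition, exchanging the roles of $\lambda_1,\lambda_2$ and of $x,y$:
\begin{equation*}
\aRC{\lambda_2}{\lambda_1}(y,x)=
\sum_{\ell=0}^a (-1)^\ell
\binom{\lambda_2+a-1}{\ell}
\binom{\lambda_1+a-1}{a-\ell}
y^{a-\ell}x^\ell.
\end{equation*}
Next I would perform the substitution $m=a-\ell$. As $\ell$ runs over $\{0,\dots,a\}$ so does $m$, the two binomial coefficients interchange into the order $\binom{\lambda_1+a-1}{m}\binom{\lambda_2+a-1}{a-m}$ that appears in the definition of $\aRC{\lambda_1}{\lambda_2}(x,y)$, and the monomial $y^{a-\ell}x^\ell$ becomes $x^{a-m}y^m$.

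The one arithmetic point to record is the sign: $(-1)^\ell=(-1)^{a-m}=(-1)^a(-1)^m$, using $(-1)^{-m}=(-1)^m$. Pulling the global factor $(-1)^a$ out of the sum then yields
\begin{equation*}
\aRC{\lambda_2}{\lambda_1}(y,x)=(-1)^a
\sum_{m=0}^a (-1)^m
\binom{\lambda_1+a-1}{m}
\binom{\lambda_2+a-1}{a-m}
x^{a-m}y^m
=(-1)^a\,\aRC{\lambda_1}{\lambda_2}(x,y),
\end{equation*}
after relabelling $m$ as $\ell$. Multiplying through by $(-1)^a$ gives the asserted identity $\aRC{\lambda_1}{\lambda_2}(x,y)=(-1)^a\aRC{\lambda_2}{\lambda_1}(y,x)$.

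I do not expect any genuine obstacle here: the statement is purely formal and simply records the symmetry of \eqref{eqn:RCxy} under simultaneously swapping the two weights and the two variables. The only thing to watch is the bookkeeping of which binomial coefficient is attached to which weight as the summation index is reversed; once the substitution $m=a-\ell$ is made, everything else is automatic.
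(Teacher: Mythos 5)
Your proof is correct: the substitution $m=a-\ell$ swaps the binomial coefficients and the monomials exactly as needed, and the sign bookkeeping $(-1)^\ell=(-1)^a(-1)^m$ is right. The paper states this lemma without proof (``Clearly we have''), and your reindexing of the sum in \eqref{eqn:RCxy} is precisely the routine verification it has in mind.
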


Second we recall that
the Jacobi polynomial $P^{\alpha,\beta}_\ell(t)$
is a polynomial of one variable $t$ of degree $\ell$
given by
\begin{equation*}
P^{\alpha, \beta}_\ell(t) = 
\frac{\Gamma(\alpha + \ell + 1)}{\Gamma(\alpha +\beta + \ell+ 1)}
\sum_{m=0}^\ell 
\frac{\Gamma(\alpha + \beta + \ell + m +1)}
{(\ell-m)! m! \Gamma(\alpha + m + 1)}
\bigg(\frac{t-1}{2}\bigg)^m.
\end{equation*}

We inflate it to a homogeneous polynomial of two
variables $x$ and $y$ of degree $\ell$ by
\begin{equation*}
P^{\alpha, \beta}_{\ell}(x,y):=
y^\ell P^{\alpha, \beta}_\ell \bigg(2 \frac{x}{y}+1 \bigg).
\end{equation*}
For instance, $P^{\alpha, \beta}_0(x,y)=1$ and
$P^{\alpha,\beta}_1(x,y) = (2+\alpha + \beta)x+(\alpha+1)y$.
It turns out that 
\begin{equation*}
\aRC{\lambda_1}{\lambda_2}(x,y)=(-1)^a
P^{\lambda_1-1, -\lambda_1 - \lambda_2 - 2a + 1}_a(x,y).
\end{equation*}
In particular, the following holds.

\begin{lemma}\label{lem:RCJacobi}
We have
\begin{equation*}
\adRC{\lambda_1}{\lambda_2} = (-1)^a 
\Rest{z_1 = z_2=z} \circ
P^{\lambda_1-1, -\lambda_1 - \lambda_2 - 2a + 1}_a
\left(\frac{\partial}{\partial z_1}, \frac{\partial}{\partial z_2}\right).
\end{equation*}
\end{lemma}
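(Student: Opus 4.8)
The plan is to reduce Lemma~\ref{lem:RCJacobi} to the single polynomial identity
\[
\aRC{\lambda_1}{\lambda_2}(x,y) = (-1)^a\,P^{\lambda_1-1,\,-\lambda_1-\lambda_2-2a+1}_a(x,y)
\]
asserted just above the statement. Once this identity of homogeneous degree-$a$ polynomials in $(x,y)$ is established, one substitutes $\partial/\partial z_1$ for $x$ and $\partial/\partial z_2$ for $y$ and applies $\Rest{z_1=z_2=z}$; since $\adRC{\lambda_1}{\lambda_2} = \Rest{z_1=z_2=z}\circ \aRC{\lambda_1}{\lambda_2}(\partial/\partial z_1,\partial/\partial z_2)$ by definition, the claimed operator identity follows at once. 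So the whole content lies in the polynomial identity.

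To prove it I would compare the two sides coefficient by coefficient: both are homogeneous of degree $a$, so it suffices to match the coefficient of each monomial $x^m y^{a-m}$ for $0\le m\le a$. For the right-hand side I would unfold the inflation $P^{\alpha,\beta}_a(x,y)=y^a P^{\alpha,\beta}_a(2x/y+1)$: feeding $t=2x/y+1$ into the series definition of the Jacobi polynomial turns $((t-1)/2)^m$ into $(x/y)^m$, and multiplying through by $y^a$ yields a sum over $x^m y^{a-m}$ whose coefficient is a product of Gamma functions in $\alpha=\lambda_1-1$ and $\beta=-\lambda_1-\lambda_2-2a+1$. After simplification this coefficient reads $\Gamma(\lambda_1+a)\,\Gamma(-\lambda_2-a+m+1)/(\Gamma(-\lambda_2-a+1)\,\Gamma(\lambda_1+m)\,(a-m)!\,m!)$.

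For the left-hand side I would perform the index change $\ell=a-m$, so the monomial becomes $x^m y^{a-m}$ with coefficient $(-1)^{a-m}\binom{\lambda_1+a-1}{a-m}\binom{\lambda_2+a-1}{m}$. Converting the first binomial to a Gamma quotient gives exactly $\Gamma(\lambda_1+a)/(\Gamma(\lambda_1+m)\,(a-m)!)$, which matches the $\lambda_1$-part above. The decisive step is the second binomial: applying the negation identity $\binom{z}{k}=(-1)^k\binom{k-1-z}{k}$ to $\binom{\lambda_2+a-1}{m}$ rewrites it as $(-1)^m\,\Gamma(-\lambda_2-a+m+1)/(\Gamma(-\lambda_2-a+1)\,m!)$, which is precisely the $\beta$-shaped factor in the Jacobi coefficient. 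The residual sign $(-1)^{a-m}\cdot(-1)^m=(-1)^a$ then produces the prefactor, completing the match.

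The only point requiring care is the bookkeeping with Gamma functions for generic complex $\lambda_1,\lambda_2$: the generalized binomials must be read through the falling-factorial (equivalently Gamma) convention, and the negation identity must be invoked in that generic form rather than merely for integer arguments. Since for fixed $a$ every coefficient on both sides is polynomial in $(\lambda_1,\lambda_2)$, it suffices to verify the coefficient equality as an identity of rational functions via the Gamma computation above, whence it extends to all $(\lambda_1,\lambda_2)\in\mathbb{C}^2$ by polynomiality. I anticipate no genuine obstacle beyond this sign-and-Gamma bookkeeping.
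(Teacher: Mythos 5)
Your proof is correct, and it is genuinely more self-contained than the paper's own treatment. The paper proves Lemma~\ref{lem:RCJacobi} only by the reduction you describe in your first paragraph: it asserts the polynomial identity $\aRC{\lambda_1}{\lambda_2}(x,y)=(-1)^a P^{\lambda_1-1,\,-\lambda_1-\lambda_2-2a+1}_a(x,y)$ with the words ``it turns out that,'' derives the lemma immediately from $\adRC{\lambda_1}{\lambda_2}=\Rest{z_1=z_2=z}\circ\aRC{\lambda_1}{\lambda_2}\bigl(\tfrac{\partial}{\partial z_1},\tfrac{\partial}{\partial z_2}\bigr)$, and for the identity itself defers to Eichler--Zagier \cite{EZ85} and the F-method proof in \cite{KP}, with no computation. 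What you supply is exactly the missing verification, and it checks out against the paper's normalizations: unfolding $P^{\alpha,\beta}_a(x,y)$ with $\alpha=\lambda_1-1$, $\beta=-\lambda_1-\lambda_2-2a+1$ gives the coefficient $\Gamma(\lambda_1+a)\Gamma(-\lambda_2-a+m+1)/\bigl(\Gamma(-\lambda_2-a+1)\Gamma(\lambda_1+m)(a-m)!\,m!\bigr)$ for $x^my^{a-m}$, while on the Rankin--Cohen side the substitution $\ell=a-m$, the Gamma form $\binom{\lambda_1+a-1}{a-m}=\Gamma(\lambda_1+a)/\bigl(\Gamma(\lambda_1+m)(a-m)!\bigr)$, and the negation identity $\binom{z}{k}=(-1)^k\binom{k-1-z}{k}$ applied to $\binom{\lambda_2+a-1}{m}$ reproduce the same expression with total sign $(-1)^{a-m}(-1)^m=(-1)^a$. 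Your closing polynomiality argument is also the right device for handling the exceptional parameters where the Gamma quotients degenerate (e.g.\ $-\lambda_2-a+1$ a non-positive integer): both sides' coefficients are polynomials in $(\lambda_1,\lambda_2)$, so generic agreement suffices. The trade-off between the two routes is clear: your coefficient computation is elementary and verifiable line by line, making the lemma independent of \cite{EZ85} and \cite{KP}, whereas the F-method route cited by the paper explains structurally \emph{why} a Jacobi polynomial must appear (from the Verma-module picture of Section~\ref{subsec:2.3}) but is not reproduced there.
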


\section{Holomorphic trick}\label{sec:htrick}

In this section we give a proof for Theorem \ref{quest:A}
by using the results of the previous sections

\subsection{Restriction to a totally real submanifold}\label{subsec:real}

Consider a totally real embedding of 
$X = \mathbb{P}^1\mathbb{C} \simeq \mathbb{C} \cup \{\infty\}$
defined by
\begin{equation}\label{eqn:PCzz}
\iota: \mathbb{P}^1\mathbb{C} 
\to \mathbb{P}^1\mathbb{C} \times \mathbb{P}^1\mathbb{C},
\quad
z\mapsto (z, \bar{z}).
\end{equation}
The map $\iota$ respects the action of $G_\mathbb{C}$ via the
following group homomorphism (we regard $G_\mathbb{C}$ as a real group), 
denoted by the same letter,
\begin{equation*}
\iota: G_\mathbb{C} \to G_\mathbb{C} \times G_\mathbb{C},
\quad
g \mapsto (g, \bar{g}).
\end{equation*}
This is because
$G_\mathbb{C}/B_\mathbb{C} \simeq \mathbb{P}^1\mathbb{C}$ and 
because the Borel subgroup $B_\mathbb{C}$ is stable by the complex
conjugation $g \mapsto \bar{g}$. Then the following lemma is immediate
from Lemma \ref{lem:Lpara}.

\begin{lemma} We have
an isomorphism of $G_\mathbb{C}$-equivariant line bundles:
\begin{equation*}
\iota^*\left(\hL{\lambda_1} \boxtimes \hL{\lambda_2}\right)
\simeq \mathcal{L}_{\lambda_1-\lambda_2, \lambda_1 + \lambda_2}.
\end{equation*}
In particular,
\begin{align}
&\iota^*(\hL{\lambda+1} \boxtimes 
\hL{\lambda-1} ) 
\simeq \cL{\lambda-1} \otimes (T^*X)^{1,0}, \label{eqn:pb1}\\
&\iota^*(\hL{\lambda-1} \boxtimes 
\hL{\lambda+1} ) 
\simeq \cL{\lambda-1} \otimes (T^*X)^{0,1} \label{eqn:pb2}.
\end{align}
\end{lemma}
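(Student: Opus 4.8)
The plan is to reduce the assertion to an elementary computation of characters of the Borel subgroup $B_\mathbb{C}$. Recall from Lemma \ref{lem:Lpara} that for $\lambda_i \in \mathbb{Z}$ the holomorphic line bundle $\hL{\lambda_i}$ coincides with $\mathcal{L}_{\lambda_i,\lambda_i} = G_\mathbb{C}\times_{B_\mathbb{C}}\chi_{\lambda_i,\lambda_i}$, so that the external tensor product $\hL{\lambda_1}\boxtimes\hL{\lambda_2}$ is the $(G_\mathbb{C}\times G_\mathbb{C})$-equivariant line bundle over $X\times X \simeq (G_\mathbb{C}\times G_\mathbb{C})/(B_\mathbb{C}\times B_\mathbb{C})$ associated with $\chi_{\lambda_1,\lambda_1}\boxtimes\chi_{\lambda_2,\lambda_2}$. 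First I would record the general principle that $G$-equivariant line bundles over a homogeneous space $G/H$ are classified by the characters of the isotropy group $H$, and that pullback along a group homomorphism $\iota$ taking one isotropy group into the other is computed by precomposing characters with $\iota$. The input needed here is precisely that $\iota(B_\mathbb{C})\subseteq B_\mathbb{C}\times B_\mathbb{C}$, which holds because $B_\mathbb{C}$ is stable under the complex conjugation $g\mapsto\bar g$; conversely $\iota^{-1}(B_\mathbb{C}\times B_\mathbb{C})=B_\mathbb{C}$, so $\iota$ induces on quotients a base-point-preserving map lifting the embedding \eqref{eqn:PCzz}.

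Granting this, the pullback $\iota^*(\hL{\lambda_1}\boxtimes\hL{\lambda_2})$ is the $G_\mathbb{C}$-equivariant line bundle over $X\simeq G_\mathbb{C}/B_\mathbb{C}$ attached to the character $b\mapsto\chi_{\lambda_1,\lambda_1}(b)\,\chi_{\lambda_2,\lambda_2}(\bar b)$ of $B_\mathbb{C}$. The computation is then immediate: writing a general element $b$ of $B_\mathbb{C}$ with diagonal entry $re^{i\theta}$ as in \eqref{eqn:chi}, its complex conjugate $\bar b$ has diagonal entry $re^{-i\theta}$, so $\chi_{\lambda_2,\lambda_2}(\bar b)=e^{-i\lambda_2\theta}r^{\lambda_2}$, and therefore
\[
\chi_{\lambda_1,\lambda_1}(b)\,\chi_{\lambda_2,\lambda_2}(\bar b)
= e^{i(\lambda_1-\lambda_2)\theta}\,r^{\lambda_1+\lambda_2}
= \chi_{\lambda_1-\lambda_2,\,\lambda_1+\lambda_2}(b).
\]
This identifies the pullback with $\mathcal{L}_{\lambda_1-\lambda_2,\lambda_1+\lambda_2}$ and proves the first isomorphism. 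The one point that deserves care is exactly this sign flip in the angular exponent: conjugation negates $\theta$, turning $\lambda_1+\lambda_2$ into $\lambda_1-\lambda_2$ in the first index of $\chi$, which is why the holomorphic/anti-holomorphic asymmetry survives in the pullback.

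For the two displayed special cases I would use that tensoring equivariant line bundles multiplies the defining characters, i.e. $\mathcal{L}_{n,\lambda}\otimes\mathcal{L}_{n',\lambda'}\simeq\mathcal{L}_{n+n',\lambda+\lambda'}$, which is clear from \eqref{eqn:chi}. Specializing the first isomorphism to $(\lambda_1,\lambda_2)=(\lambda+1,\lambda-1)$ gives $\iota^*(\hL{\lambda+1}\boxtimes\hL{\lambda-1})\simeq\mathcal{L}_{2,2\lambda}$; on the other hand, Lemma \ref{lem:Lpara} gives $\cL{\lambda-1}\simeq\mathcal{L}_{0,2\lambda-2}$ and $(T^*X)^{1,0}\simeq\mathcal{L}_{2,2}$, whence $\cL{\lambda-1}\otimes(T^*X)^{1,0}\simeq\mathcal{L}_{2,2\lambda}$, establishing \eqref{eqn:pb1}. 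The symmetric choice $(\lambda_1,\lambda_2)=(\lambda-1,\lambda+1)$ together with $(T^*X)^{0,1}\simeq\mathcal{L}_{-2,2}$ yields $\mathcal{L}_{-2,2\lambda}$ on both sides, giving \eqref{eqn:pb2}.

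There is no analytic difficulty here; the proof is a bookkeeping of characters. The only step requiring genuine attention — which I would isolate as a short preliminary observation rather than leave implicit — is the claim that pulling back a $(G_\mathbb{C}\times G_\mathbb{C})$-equivariant line bundle along the real group homomorphism $\iota$ is computed by restricting the associated character to $\iota(B_\mathbb{C})$. This is where the conjugation-stability of the lower-triangular Borel $B_\mathbb{C}$ enters, and it is the sole place where one must verify compatibility of the equivariant structure with the totally real embedding \eqref{eqn:PCzz} rather than merely manipulate formulas.
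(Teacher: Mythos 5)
Your proposal is correct and takes essentially the same route as the paper, which states the lemma as immediate from Lemma~\ref{lem:Lpara} after observing (just before the lemma) that $B_\mathbb{C}$ is stable under $g\mapsto\bar g$, so that $\iota$ is equivariant. Your character bookkeeping, $\chi_{\lambda_1,\lambda_1}(b)\,\chi_{\lambda_2,\lambda_2}(\bar b)=\chi_{\lambda_1-\lambda_2,\lambda_1+\lambda_2}(b)$ via the sign flip $\theta\mapsto-\theta$, together with the tensor rule $\mathcal{L}_{n,\lambda}\otimes\mathcal{L}_{n',\lambda'}\simeq\mathcal{L}_{n+n',\lambda+\lambda'}$, is exactly the computation the paper leaves implicit.
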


\begin{proposition}\label{prop:htrick}
The isomorphisms \eqref{eqn:pb1} and \eqref{eqn:pb2} induce
injective $G_\mathbb{C}$-equivariant 
homomorphisms between equivariant sheaves:
\begin{align*}
&(\iota^*)^{1,0}: \mathcal{O}(\hL{\lambda+1}) 
\otimes \mathcal{O}(\hL{\lambda-1}) 
\to \mathcal{E}^{1,0}_{\lambda-1},
\quad f_1(z_1) \otimes f_2(z_2) \mapsto 
f_1(z) f_2(\bar{z}) dz,\\
&(\iota^*)^{0,1}: \mathcal{O}(\hL{\lambda-1}) 
\otimes \mathcal{O}(\hL{\lambda+1}) 
\to \mathcal{E}^{0,1}_{\lambda-1},
\quad f_1(z_1) \otimes f_2(z_2) \mapsto 
f_1(z) f_2(\bar{z}) d\bar{z},
\end{align*}
that is, $(\iota^*)^{1,0}$ and $(\iota^*)^{0,1}$ are injective
on every open set $D$ in $\mathbb{P}^1\mathbb{C}$, and
\begin{align*}
(\iota^*)^{1,0} \circ
\left( \hpi{\lambda+1}(g) \otimes \hpi{\lambda-1} (\bar g) \right)
&= \varpi^1_{\lambda-1}(g) \circ (\iota^*)^{1,0} \\
(\iota^*)^{0,1} \circ
\left( \hpi{\lambda-1}(g) \otimes \hpi{\lambda+1} (\bar g) \right)
&= \varpi^1_{\lambda-1}(g) \circ (\iota^*)^{0,1}
\end{align*}
hold for any $g$ whenever they make sense.
\end{proposition}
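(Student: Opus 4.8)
The plan is to verify, for $(\iota^*)^{1,0}$ (the case of $(\iota^*)^{0,1}$ being entirely parallel, with $dz$ replaced by $d\bar z$ and the roles of $\lambda+1$ and $\lambda-1$ interchanged), three separate assertions: that the displayed formula does define a map into $\mathcal{E}^{1,0}_{\lambda-1}$, that this map intertwines the indicated $G_\mathbb{C}$-actions, and that it is injective on each open set $D$. The first two assertions are essentially formal. Since $\iota$ intertwines the $G_\mathbb{C}$-action on $X$ with the diagonal action $g\mapsto(g,\bar g)$ on $X\times X$, and since the bundle isomorphism \eqref{eqn:pb1} is $G_\mathbb{C}$-equivariant, the operation $\iota^*$ of restricting a section to the anti-diagonal automatically carries $\mathcal{O}(\hL{\lambda+1})\otimes\mathcal{O}(\hL{\lambda-1})$ equivariantly into the space of smooth sections of $\cL{\lambda-1}\otimes(T^*X)^{1,0}$, namely $\mathcal{E}^{1,0}_{\lambda-1}$. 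Thus the genuine content of the proposition is the injectivity.

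For the covariance I would unwind the multiplier formulas as a check. Writing $g^{-1}=\left(\begin{smallmatrix}a&b\\c&d\end{smallmatrix}\right)$, applying $\hpi{\lambda+1}(g)\otimes\hpi{\lambda-1}(\bar g)$ to $f_1\otimes f_2$ and then setting $z_1=z$, $z_2=\bar z$ produces $f_1$ and $f_2$ evaluated at $\frac{az+b}{cz+d}$ and its complex conjugate, times the automorphy factor $(cz+d)^{-(\lambda+1)}(\overline{cz+d})^{-(\lambda-1)}$. On the other side, by Lemma \ref{lem:Lpara} one has $\cL{\lambda-1}\otimes(T^*X)^{1,0}\simeq\mathcal{L}_{2,2\lambda}$, and the factor $(cz+d)^{-2}$ coming from the pull-back of $dz$ combines with $|cz+d|^{-2(\lambda-1)}$ coming from the conformal weight of $\cL{\lambda-1}$ to give exactly $(cz+d)^{-(\lambda+1)}(\overline{cz+d})^{-(\lambda-1)}$. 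The two sides therefore coincide; alternatively one may bypass the computation entirely and cite the equivariance of $\iota$ and of \eqref{eqn:pb1}.

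The heart of the matter is injectivity. Fix $D$ and a finite element $\sum_{i=1}^{N} f_1^{(i)}\otimes f_2^{(i)}\in\mathcal{O}(D)\otimes\mathcal{O}(D)$, which after a linear change of basis we may assume has the $f_2^{(i)}$ linearly independent over $\mathbb{C}$. Suppose its image vanishes; since the frame $dz$ is nowhere zero, this means $\sum_i f_1^{(i)}(z)\,f_2^{(i)}(\bar z)=0$ for all $z\in D$. The key step is that the anti-diagonal $\{(z,\bar z)\}$ is a maximally totally real submanifold of $X\times X$, so that the holomorphic function of two variables $H(z_1,z_2):=\sum_i f_1^{(i)}(z_1)\,f_2^{(i)}(z_2)$, which vanishes along this submanifold, must vanish identically by the identity theorem. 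Separating variables and invoking the linear independence of the $f_2^{(i)}$ then forces $f_1^{(i)}\equiv0$ for every $i$, so the original tensor was already zero. (Alternatively, and entirely elementarily, one expands $\sum_i f_1^{(i)}(z)f_2^{(i)}(\bar z)$ in the linearly independent functions $z^m\bar z^n$; vanishing of this expression forces each coefficient to vanish, and the independence of the $f_2^{(i)}$ again yields $f_1^{(i)}\equiv0$.)

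The only genuinely non-formal step, as opposed to routine bookkeeping, is this injectivity. It is here that the \emph{totally real} nature of the embedding $\iota$ is used essentially: it is what converts the a priori merely smooth relation $\sum_i f_1^{(i)}(z)f_2^{(i)}(\bar z)=0$ into a holomorphic one, $H\equiv0$, to which the identity theorem applies (or, in the elementary route, what underlies the linear independence of the monomials $z^m\bar z^n$). A secondary point deserving care is globality across the point at infinity of $\mathbb{P}^1\mathbb{C}$, i.e.\ that $f_1(z)f_2(\bar z)\,dz$ extends to a smooth section of $\cL{\lambda-1}\otimes(T^*X)^{1,0}$ over all of $X$; but this is precisely what the bundle isomorphism \eqref{eqn:pb1} guarantees, so no separate verification is needed.
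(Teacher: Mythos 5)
Your proof is correct and takes essentially the same route as the paper, whose own two-line argument likewise obtains the covariance formally from the equivariant bundle isomorphisms \eqref{eqn:pb1} and \eqref{eqn:pb2} and the injectivity from the identity theorem of holomorphic functions, using that $\iota$ is a totally real embedding. You merely supply more detail (the explicit multiplier check, and the reduction of injectivity to linear independence via the holomorphic function $H(z_1,z_2)$) than the paper does.
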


\begin{proof}
The injectivity follows from 
the identity theorem of holomorphic functions because
$\iota: \mathbb{P}^1\mathbb{C} \to 
\mathbb{P}^1\mathbb{C} \times \mathbb{P}^1\mathbb{C}$
is a totally real embedding. The covariance property is derived
from \eqref{eqn:pb1} and \eqref{eqn:pb2}.
\end{proof}

Fix $\lambda \in \mathbb{Z}$ and $a \in \mathbb{N}$,
and set $\nu = \lambda + a$. 
We want to relate the Rankin--Cohen brackets
$\adRC{\lambda \pm 1}{\lambda \mp 1}$ to our 
differential operator $\mathcal{D}$ (see Question \ref{quest:A})
in the sense that
both of the following diagrams commute:

\begin{alignat*}{3}
\mathcal{O}(\hL{\lambda+1}) \otimes\mathcal{O}(\hL{\lambda-1})
&\stackrel{(\iota^*)^{1,0}}{\lhook\joinrel\relbar\joinrel\relbar\joinrel\relbar\joinrel\relbar}
&
\!\!\joinrel\rightarrow 
\mathcal{E}^{1,0}_{\lambda-1}(\mathbb{C})& 
\subset \mathcal{E}^{1}_{\lambda-1}(\mathbb{R}^2)  &
&\simeq C^\infty(\mathbb{R}^2) \oplus C^\infty(\mathbb{R}^2)\\
{\scriptstyle \adRC{\lambda+1}{\lambda-1}} {\big \downarrow} \hskip 48pt
& &&     &&\hskip 60pt \big \downarrow {\scriptstyle\mathcal{D}}\\
\mathcal{O}(\hL{2\lambda+2a}) \phantom{mm} 
& 
\stackrel{\iota^*}{\lhook\joinrel\relbar\joinrel\relbar\joinrel\relbar\joinrel\relbar}&
\joinrel\relbar\joinrel\relbar\joinrel\relbar
\joinrel\relbar\joinrel\relbar\joinrel\relbar\joinrel\relbar&
\!\!\joinrel\rightarrow \hskip 2pt
\mathcal{E}_\nu(\mathbb{R}) &&
\simeq  \hskip 35pt C^\infty(\mathbb{R}), 
\end{alignat*}
and
\begin{alignat*}{3}
\mathcal{O}(\hL{\lambda-1}) \otimes\mathcal{O}(\hL{\lambda+1})
&\stackrel{(\iota^*)^{0,1}}{\lhook\joinrel\relbar\joinrel\relbar\joinrel\relbar\joinrel\relbar}
&
\!\!\joinrel\rightarrow 
\mathcal{E}^{0,1}_{\lambda-1}(\mathbb{C})& 
\subset \mathcal{E}^{1}_{\lambda-1}(\mathbb{R}^2)  &
&\simeq C^\infty(\mathbb{R}^2) \oplus C^\infty(\mathbb{R}^2)\\
{\scriptstyle (-1)^a\adRC{\lambda-1}{\lambda+1}} {\big \downarrow} \hskip 48pt
& &&     &&\hskip 60pt \big \downarrow {\scriptstyle\mathcal{D}}\\
\mathcal{O}(\hL{2\lambda+2a}) \phantom{mm} 
& 
\stackrel{\iota^*}{\lhook\joinrel\relbar\joinrel\relbar\joinrel\relbar\joinrel\relbar}&
\joinrel\relbar\joinrel\relbar\joinrel\relbar
\joinrel\relbar\joinrel\relbar\joinrel\relbar\joinrel\relbar&
\!\!\joinrel\rightarrow \hskip 2pt
\mathcal{E}_\nu(\mathbb{R}) &&
\simeq  \hskip 35pt C^\infty(\mathbb{R}).
\end{alignat*}
Here we have used the following identification:
\begin{equation*}
\mathcal{E}^1_{\lambda-1}(\mathbb{R}^2) \simeq
C^\infty(\mathbb{R}^2) \oplus C^\infty(\mathbb{R}^2),
\quad
fdx+gdy \mapsto (f, g).
\end{equation*}

We define homogeneous polynomials
$D_1$, $D_2$ with real coefficients so that
\begin{equation*}
D_1(x,y) + \sqrt{-1} D_2(x,y) = 
2^{-a}\aRC{\lambda+1}{\lambda-1}
(x-\sqrt{-1}y, x+\sqrt{-1}y),
\end{equation*}
where $\aRC{\lambda_1}{\lambda_2}(x,y)$ is a 
polynomial defined in \eqref{eqn:RCxy}.
We set
\begin{align}\label{eqn:DRC}
\mathcal{D}_1:=D_1\left(\frac{\partial}{\partial x},\frac{\partial}{\partial y}  \right),
\quad
\mathcal{D}_2:=D_2\left(\frac{\partial}{\partial x}, \frac{\partial}{\partial y} \right),
\quad
\mathcal{D}:=\Rest{y=0} \circ \left(\mathcal{D}_1, \mathcal{D}_2 \right).
\end{align}

\begin{lemma}\label{lem:RCreal}
For any holomorphic functions $f_1$ and $f_2$,
\begin{align*}
&\mathcal{D}\left( (\iota^*)^{1,0}(f_1 \otimes f_2) \right)
= \iota^* \adRC{\lambda+1}{\lambda-1}(f_1 \otimes f_2),\\
&\mathcal{D}\left( (\iota^*)^{0,1}(f_1 \otimes f_2) \right)
= (-1)^a\iota^* \adRC{\lambda-1}{\lambda+1}(f_1 \otimes f_2).
\end{align*}
\end{lemma}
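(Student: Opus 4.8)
The plan is to reduce both equalities to a single algebraic identity expressing $\mathcal{D}_1 \pm \sqrt{-1}\,\mathcal{D}_2$ in terms of the Wirtinger derivatives $\partial_z = \tfrac{1}{2}(\partial_x - \sqrt{-1}\,\partial_y)$ and $\partial_{\bar z} = \tfrac{1}{2}(\partial_x + \sqrt{-1}\,\partial_y)$, after which the assertion becomes a bookkeeping match with the definition \eqref{eqn:RCxy} of $\aRC{\lambda_1}{\lambda_2}$. First I would record $\partial_x - \sqrt{-1}\,\partial_y = 2\partial_z$ and $\partial_x + \sqrt{-1}\,\partial_y = 2\partial_{\bar z}$. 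Substituting $x \mapsto \partial_x$, $y \mapsto \partial_y$ in the defining relation \eqref{eqn:DRC} for $D_1, D_2$, the two arguments $x - \sqrt{-1}\,y$ and $x + \sqrt{-1}\,y$ become $2\partial_z$ and $2\partial_{\bar z}$; since $\aRC{\lambda+1}{\lambda-1}$ is homogeneous of degree $a$, the factor $2^a$ cancels the normalizing $2^{-a}$ and yields
\[
\mathcal{D}_1 + \sqrt{-1}\,\mathcal{D}_2 = \aRC{\lambda+1}{\lambda-1}(\partial_z, \partial_{\bar z}).
\]
Because $\lambda \in \mathbb{Z}$, the coefficients of $\aRC{\lambda+1}{\lambda-1}$ are real, so conjugating them — equivalently swapping the two arguments — gives in the same way
\[
\mathcal{D}_1 - \sqrt{-1}\,\mathcal{D}_2 = \aRC{\lambda+1}{\lambda-1}(\partial_{\bar z}, \partial_z).
\]

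For the first identity I would compute $(\iota^*)^{1,0}(f_1 \otimes f_2) = f_1(z)f_2(\bar z)\,dz$, and under the identification $f\,dx + g\,dy \mapsto (f,g)$ this is the pair $\big(f_1(z)f_2(\bar z),\ \sqrt{-1}\,f_1(z)f_2(\bar z)\big)$ since $dz = dx + \sqrt{-1}\,dy$. Applying $\mathcal{D} = \Rest{y=0}\circ(\mathcal{D}_1, \mathcal{D}_2)$ and pulling the scalar $\sqrt{-1}$ through the (complex-linear) constant-coefficient operator $\mathcal{D}_2$ produces $\big((\mathcal{D}_1 + \sqrt{-1}\,\mathcal{D}_2)[f_1(z)f_2(\bar z)]\big)\big|_{y=0}$. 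Here $\partial_z$ differentiates the holomorphic factor $f_1$ and $\partial_{\bar z}$ the antiholomorphic factor $f_2$, so the first operator identity turns this into $\sum_{\ell} (-1)^\ell \binom{\lambda+a}{\ell}\binom{\lambda+a-2}{a-\ell} f_1^{(a-\ell)}(z)\, f_2^{(\ell)}(\bar z)$; restricting to $y=0$ sets $z = \bar z = x$, which is exactly $\iota^*\adRC{\lambda+1}{\lambda-1}(f_1 \otimes f_2)$ by the definition of the Rankin--Cohen bracket.

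The second identity is handled identically: $(\iota^*)^{0,1}(f_1 \otimes f_2) = f_1(z)f_2(\bar z)\,d\bar z$ corresponds to the pair $\big(f_1(z)f_2(\bar z),\ -\sqrt{-1}\,f_1(z)f_2(\bar z)\big)$ because $d\bar z = dx - \sqrt{-1}\,dy$, so $\mathcal{D}$ now produces $\big((\mathcal{D}_1 - \sqrt{-1}\,\mathcal{D}_2)[f_1(z)f_2(\bar z)]\big)\big|_{y=0} = \big(\aRC{\lambda+1}{\lambda-1}(\partial_{\bar z}, \partial_z)[f_1(z)f_2(\bar z)]\big)\big|_{y=0}$. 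Invoking Lemma \ref{lem:RCdual} to rewrite $\aRC{\lambda+1}{\lambda-1}(\partial_{\bar z}, \partial_z) = (-1)^a \aRC{\lambda-1}{\lambda+1}(\partial_z, \partial_{\bar z})$ and restricting to $y=0$ then gives precisely $(-1)^a\, \iota^*\adRC{\lambda-1}{\lambda+1}(f_1 \otimes f_2)$. I expect the only genuine obstacle to be the careful tracking of the factors $2^{\pm a}$ from homogeneity together with the conjugation/argument-swap step, where the hypothesis $\lambda \in \mathbb{Z}$ (guaranteeing real coefficients) is essential; the remainder is a direct match of the binomial coefficients against \eqref{eqn:RCxy}.
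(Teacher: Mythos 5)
Your proposal is correct and takes essentially the same route as the paper's proof: identify the pulled-back $(1,0)$- and $(0,1)$-forms with the pairs $(f,\pm\sqrt{-1}f)$, rewrite $\mathcal{D}_1\pm\sqrt{-1}\mathcal{D}_2$ as $\aRC{\lambda+1}{\lambda-1}$ evaluated at the Wirtinger derivatives $(\partial_z,\partial_{\bar z})$ (resp.\ $(\partial_{\bar z},\partial_z)$), apply holomorphy of $f_1,f_2$, restrict to $y=0$, and invoke Lemma \ref{lem:RCdual} for the sign $(-1)^a$. The only difference is presentational: the paper proves the first identity this way and dispatches the second with a one-line appeal to Lemma \ref{lem:RCdual}, whereas you make the $2^{\pm a}$ bookkeeping and the conjugated operator identity explicit.
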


\begin{proof}
Let $\omega:=(\iota^*)^{1,0}(f_1 \otimes f_2) = f_1(z)f_2(\bar{z})dz$.
If we write $\omega = fdx + gdy$ then $f(z) = f_1 (z) f_2(\bar{z})$
and $g = \sqrt{-1}f$. Therefore,
\begin{align*}
\mathcal{D}\omega &= \Rest{y=0} \circ (\mathcal{D}_1, \mathcal{D}_2)
\begin{pmatrix} f \\ g \end{pmatrix},\\
(\mathcal{D}_1,\mathcal{D}_2) \begin{pmatrix} f \\ g \end{pmatrix}
&=\left(\mathcal{D}_1 + \sqrt{-1}\mathcal{D}_2) (f_1(z) f_2(\bar{z})\right).
\end{align*}
If we write 
$\aRC{\lambda+1}{\lambda-1}(x,y) = \sum_{\ell=0}^a r_\ell x^{a-\ell}y^\ell$
then 
\begin{align*}
(\mathcal{D}_1 + \sqrt{-1} \mathcal{D}_2)\left(f_1(z)f_2(\bar{z})\right)
&= \aRC{\lambda+1}{\lambda-1}
\bigg(\frac{\partial}{\partial z},
\frac{\partial}{\partial \bar{z}} \bigg)
(f_1(z)f_2(\bar{z}))\\
&= \sum_{\ell=0}^a r_\ell 
\frac{\partial^{a-\ell}f_1}{\partial z^{a-\ell}} (z)
\frac{\partial^{\ell}f_2}{\partial \bar{z}^{\ell}} (\bar{z}),
\end{align*}
because $f_1$ and $f_2$ are holomorphic. Taking
the restriction to $y=0$, we get
\begin{equation*}
\mathcal{D}(\omega)
= \sum_{\ell=0}^a r_\ell 
\frac{\partial^{a-\ell}f_1}{\partial x^{a-\ell}} (x)
\frac{\partial^{\ell}f_2}{\partial x^\ell} (x)
=\iota^* \adRC{\lambda+1}{\lambda-1}(f_1 \otimes f_2).
\end{equation*}
Hence we have proved the first identity. The second
identity follows from Lemma \ref{lem:RCdual}.
\end{proof}

\begin{remark}
If we multiply the bidifferential operator
$\adRC{\lambda+1}{\lambda-1}$ by
$\sqrt{-1}$ then
obviously \eqref{eqn:RCcov} holds, where the role of 
$(\mathcal{D}_1, \mathcal{D}_2)$ 
is changed into $(-\mathcal{D}_2, \mathcal{D}_1)$ because
\begin{equation*}
\sqrt{-1}
(\mathcal{D}_1 + \sqrt{-1}\mathcal{D}_2) 
= -\mathcal{D}_2 + \sqrt{-1}\mathcal{D}_1.
\end{equation*}
This explains Proposition \ref{prop:Ddual} from
the ``holomorphic trick."
\end{remark}

\subsection{Identities of Jacobi polynomials}\label{subsec:sp}

For $a \in \mathbb{N}_+$, we define the following three meromorphic
functions of $\lambda$ by

\begin{align*}
A_a(\lambda)&:= \frac{2 \lambda^2 + 2(a-1) \lambda + a(a-1)}
{a (2\lambda+a -1)},\\
B_a(\lambda)&:=\frac{(\lambda-1)(2\lambda+1)}
{a(2\lambda+a-1)}, \\
U_a(\lambda)&:=
\frac{2 \left(\lambda+ [\frac{a}{2}] \right)_{[\frac{a-1}{2}]}}
{\left(\lambda + \frac{1}{2} \right)_{[\frac{a-1}{2}]}},
\end{align*}
where 
$(\mu)_k:= \mu(\mu+1)\cdots (\mu+k-1) = 
\frac{\Gamma(\mu+k)}{\Gamma(\mu)}$
is the Pochhammer symbol.

\begin{proposition}\label{prop:Jacob}
For $a\in \mathbb{N}_+$, we have 
\begin{align*}
&(1-z)^aP^{\lambda, -2\lambda-2a+1}_a
\bigg(\frac{3+z}{1-z}\bigg)\\
&=(-1)^{a-1}U_a(\lambda) 
\bigg(
(1-A_a(\lambda)z)C^{\lambda+\frac{1}{2}}_{a-1}(z)+
B_a(\lambda)(1-z^2)C^{\lambda+\frac{3}{2}}_{a-2}(z) \bigg).
\end{align*}

Equivalently,\begin{align*}
&P^{\lambda, -2\lambda-2a+1}_a(x-\sqrt{-1}y, x+\sqrt{-1}y)\\
&=(\sqrt{-1})^{a-1}U_a(\lambda)\bigg(
xC^{\lambda+\frac{1}{2}}_{a-1}(-x^2, y) +
\sqrt{-1}\left( A_a(\lambda) y C^{\lambda+\frac{1}{2}}_{a-1}(-x^2, y)
+ B_a(\lambda) (x^2 + y^2) C^{\lambda+\frac{3}{2}}_{a-2}(-x^2, y)\right) \bigg).
\end{align*}
\end{proposition}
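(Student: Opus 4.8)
The plan is to reduce the first (one-variable) identity to a connection formula between an asymmetric Jacobi polynomial and two Gegenbauer polynomials, and then to establish that connection formula by a parity argument. Throughout, both sides are polynomials in $z$ (resp.\ in $x,y$) whose coefficients are rational in $\lambda$, so it suffices to argue for generic $\lambda$ and invoke the identity theorem for rational functions. I would first record that the two displayed identities are equivalent, via the substitution $z=-\sqrt{-1}\,y/x$. With this choice one has $\frac{3+z}{1-z}=\frac{3x-\sqrt{-1}y}{x+\sqrt{-1}y}$ and $(1-z)^a=(x+\sqrt{-1}y)^a/x^a$, so multiplying the first identity by $x^a$ and using $P^{\alpha,\beta}_a(X,Y)=Y^aP^{\alpha,\beta}_a(2X/Y+1)$ turns its left-hand side into $P^{\lambda,-2\lambda-2a+1}_a(x-\sqrt{-1}y,x+\sqrt{-1}y)$. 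On the right-hand side, the inflation rule $C^{\alpha}_\ell(s,t)=s^{\ell/2}C^{\alpha}_\ell(t/\sqrt s)$ with $(s,t)=(-x^2,y)$ gives $x^{\ell}C^{\alpha}_\ell(z)=(-\sqrt{-1})^{\ell}C^{\alpha}_\ell(-x^2,y)$, and together with $1-z^2=(x^2+y^2)/x^2$ and $x^az=-\sqrt{-1}\,y\,x^{a-1}$ one regroups the three terms; collecting the resulting powers of $\sqrt{-1}$ produces exactly the factor $(\sqrt{-1})^{a-1}$ and the real/imaginary splitting of the second identity. This step is routine bookkeeping.

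For the one-variable identity itself, the key simplification is a single hypergeometric transformation. Writing $P^{\lambda,-2\lambda-2a+1}_a(t)=\binom{a+\lambda}{a}\,{}_2F_1(-a,-(\lambda+a-2);\lambda+1;\frac{1-t}{2})$ and inserting $t=\frac{3+z}{1-z}$, so that $\frac{1-t}{2}=-\frac{1+z}{1-z}$, the Pfaff transformation ${}_2F_1(p,q;r;w)=(1-w)^{-p}\,{}_2F_1(p,r-q;r;\frac{w}{w-1})$ with $p=-a$, $q=-(\lambda+a-2)$, $r=\lambda+1$, $w=-\frac{1+z}{1-z}$ (so that $1-w=\frac{2}{1-z}$ and $\frac{w}{w-1}=\frac{1+z}{2}$) clears the denominator and yields
\begin{align*}
(1-z)^a P^{\lambda,-2\lambda-2a+1}_{a}\!\left(\frac{3+z}{1-z}\right)
&=2^a\binom{a+\lambda}{a}\,{}_2F_1\!\left(-a,\,2\lambda+a-1;\,\lambda+1;\,\frac{1+z}{2}\right)\\
&=(-2)^a P^{\lambda-2,\lambda}_{a}(z),
\end{align*}
where the last equality uses $P^{\lambda,\lambda-2}_a(-z)=(-1)^aP^{\lambda-2,\lambda}_a(z)$. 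This replaces the transformed Jacobi polynomial by the honest Jacobi polynomial $P^{\lambda-2,\lambda}_a(z)$, whose two parameters differ by $2$, and it remains to prove the connection formula
\[
(-2)^a P^{\lambda-2,\lambda}_{a}(z)
=(-1)^{a-1}U_a(\lambda)\Big((1-A_a(\lambda)z)\,C^{\lambda+\frac{1}{2}}_{a-1}(z)+B_a(\lambda)(1-z^2)\,C^{\lambda+\frac{3}{2}}_{a-2}(z)\Big).
\]

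To prove this I would decompose by parity in $z$. Since $C^{\lambda+\frac{1}{2}}_{a-1}$ has parity $(-1)^{a-1}$ while both $z\,C^{\lambda+\frac{1}{2}}_{a-1}$ and $(1-z^2)C^{\lambda+\frac{3}{2}}_{a-2}$ have parity $(-1)^{a}$, and since $P^{\lambda-2,\lambda}_a(-z)=(-1)^aP^{\lambda,\lambda-2}_a(z)$ splits $P^{\lambda-2,\lambda}_a$ into its even and odd parts, the displayed identity is equivalent to two scalar identities, one per parity component. Using the Gegenbauer--Jacobi relation $C^{\nu}_k(z)=\frac{(2\nu)_k}{(\nu+\frac{1}{2})_k}P^{\nu-\frac{1}{2},\nu-\frac{1}{2}}_k(z)$, the $(-1)^{a-1}$-parity component is a one-term relation that determines $U_a(\lambda)$, while the $(-1)^{a}$-parity component is a two-term relation that determines $A_a(\lambda)$ and $B_a(\lambda)$; I would verify each by comparing the coefficients supplied by the explicit series for $P^{\lambda-2,\lambda}_a$ and for the Gegenbauer polynomials, and cross-check the outcome directly at $a=1,2$.

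The hard part will be the normalizing constant $U_a(\lambda)$. The even/odd split forces a case distinction according to the parity of $a$, and this is precisely the source of the floor functions $[\frac{a}{2}]$ and $[\frac{a-1}{2}]$ inside the Pochhammer symbols of $U_a(\lambda)$; reconciling the two parity cases into the single closed form, and tracking the accompanying $\Gamma$-factors, is the delicate step. By contrast $A_a(\lambda)$ and $B_a(\lambda)$ fall out of a cleaner finite comparison of low-order coefficients. Alternatively, one could prove the connection formula by checking that both sides satisfy the same three-term recurrence in $a$ with matching data at $a=1,2$, using the standard contiguous relations for Gegenbauer polynomials, but the $\lambda$-dependence of $A_a,B_a,U_a$ makes the parity argument more transparent.
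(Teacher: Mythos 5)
The paper itself offers no proof of this proposition---it is stated as a fact, with an F-method proof deferred to a subsequent paper---so your attempt must be judged on its own merits. Its first two steps are correct and are actually carried out, not merely sketched: the bookkeeping identifying the two displays under $z=-\sqrt{-1}\,y/x$ checks out, and the Pfaff-transformation step is right, giving
\begin{equation*}
(1-z)^a P^{\lambda,-2\lambda-2a+1}_a\Bigl(\tfrac{3+z}{1-z}\Bigr)
=2^a P^{\lambda,\lambda-2}_a(-z)=(-2)^a P^{\lambda-2,\lambda}_a(z),
\end{equation*}
which is a genuine simplification since it removes the argument substitution entirely. The parity splitting of the resulting connection formula into a $(-1)^{a-1}$-component and a $(-1)^{a}$-component is also legitimate, and both of the resulting scalar identities are in fact true.

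The gap is in the final step: those two parity identities, namely
\begin{equation*}
2^{a-1}\bigl(P^{\lambda-2,\lambda}_a(z)-P^{\lambda,\lambda-2}_a(z)\bigr)=-U_a(\lambda)\,C^{\lambda+\frac12}_{a-1}(z)
\end{equation*}
and its companion of the other parity involving $A_a$ and $B_a$, are exactly where the content of the proposition lives, and you never prove them for general $a$: you verify $a=1,2$ and say you ``would verify each by comparing the coefficients supplied by the explicit series.'' Checking two values of $a$ establishes nothing for all $a$, and the proposed comparison is not routine: the Jacobi series is given in powers of $(t-1)/2$ while the Gegenbauer side is graded by monomials, so matching coefficients produces double sums that must be closed by Vandermonde/Saalsch\"utz-type summations, and you give no argument that they close, nor any derivation of the closed forms of $U_a$, $A_a$, $B_a$. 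This step can be completed, and more cleanly than by raw series, along the lines you only hint at in your last sentence: applying the contiguous relation $P^{\alpha,\beta-1}_n-P^{\alpha-1,\beta}_n=P^{\alpha,\beta}_{n-1}$ twice (passing through $P^{\lambda-1,\lambda-1}_a$), then the relations $(2n+\alpha+\beta)P^{\alpha,\beta-1}_n=(n+\alpha+\beta)P^{\alpha,\beta}_n+(n+\alpha)P^{\alpha,\beta}_{n-1}$ and $(2n+\alpha+\beta)P^{\alpha-1,\beta}_n=(n+\alpha+\beta)P^{\alpha,\beta}_n-(n+\beta)P^{\alpha,\beta}_{n-1}$ with $(\alpha,\beta)=(\lambda,\lambda)$, $n=a-1$, yields
\begin{equation*}
P^{\lambda-2,\lambda}_a(z)-P^{\lambda,\lambda-2}_a(z)=-\frac{2\lambda+a-1}{\lambda+a-1}\,P^{\lambda,\lambda}_{a-1}(z),
\end{equation*}
and combining this with $P^{\lambda,\lambda}_{a-1}=\frac{(\lambda+1)_{a-1}}{(2\lambda+1)_{a-1}}C^{\lambda+\frac12}_{a-1}$ proves the odd-parity identity with $U_a(\lambda)=2^{a-1}\frac{(2\lambda+a-1)\,(\lambda+1)_{a-1}}{(\lambda+a-1)\,(2\lambda+1)_{a-1}}$, which the duplication formula shows to coincide with the stated floor-function expression. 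The other parity identity, which pins down $A_a(\lambda)$ and $B_a(\lambda)$, requires a similar but longer argument of the same kind; until that is supplied, what you have is a correct and useful reduction plus a plan, not a proof.
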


Proposition \ref{prop:Jacob} will be used
in the proof of Theorem \ref{thm:A} in the next subsection.
We want to note that we wondered if
the first equation of Proposition \ref{prop:Jacob}
was already known; however,  
we could not find the identity in the literature.

One might give an alternative proof of Proposition \ref{prop:Jacob}
by applying the F-method to a vector bundle case. 
We will discuss this approach in a subsequent paper.

\subsection{Proof of Theorem \ref{thm:A}}\label{subsec:proof}

The relations in Lemma \ref{lem:RCreal} 
and the covariance property \eqref{eqn:RCcov} 
of the Rankin--Cohen brackets
imply that the differential operator
$\mathcal{D}$ defined in \eqref{eqn:DRC}
satisfies the covariance relations \eqref{eqn:A2}
on the image 
\begin{equation*}
(\iota^*)^{1,0}
\big( \mathcal{O}(\hL{\lambda+1})\otimes 
\mathcal{O}(\hL{\lambda-1}) \big)
+ (\iota^*)^{0,1}
\big( \mathcal{O}(\hL{\lambda-1})\otimes 
\mathcal{O}(\hL{\lambda+1}) \big).
\end{equation*}

In order to prove \eqref{eqn:A2}, we need to show that
the image 
is dense in $C^\infty(\mathbb{R}^2) \oplus C^\infty(\mathbb{R}^2)$
topologized by uniform convergence on compact sets.
To see this we note that the image contains a linear span
of the following 	1-forms
\begin{equation*}
z^m\bar{z}^n dz, \quad
z^m\bar{z}^n d\bar{z}, \quad
(m, n \in \mathbb{N}).
\end{equation*}
Since a linear span of $(x+iy)^m(x-iy)^n$ $(m,n\in \mathbb{N})$
is dense in $C^\infty(\mathbb{R}^2)$ by the Stone--Weierstrass theorem,
we conclude that $\mathcal{D}$ satisfies \eqref{eqn:A2}.
An explicit formula for  the operators $(\mathcal{D}_1,\mathcal{D}_2)$ 
is derived from the Rankin--Cohen brackets by using
Lemma \ref{lem:RCJacobi} and 
Proposition \ref{prop:Jacob}
for $\lambda \in \mathbb{Z}$.
Then the covariance relations \eqref{eqn:invF}
are satisfied for all $\lambda \in \mathbb{C}$ because $\mathbb{Z}$
is Zariski dense in $\mathbb{C}$.

If $2\lambda \notin -\mathbb{N}$ then the dimension of solutions is 
two by Proposition \ref{prop:Verma} and the one-to-one 
correspondence \eqref{eqn:KP}.
Since $\mathcal{D}$ and $\mathcal{D}^\vee$ are linearly independent
for our solution $\mathcal{D}$, the linear span of $\mathcal{D}$ and 
$\mathcal{D}^\vee$ exhausts all the solutions by Proposition \ref{prop:Ddual}.
Hence Theorem \ref{thm:A} is proved.

\subsection{Scalar-valued case}\label{subsec:scalar}

So far we have discussed a family of vector-valued
differential operators that yield functional identities
satisfied by vector-valued functions. We close this 
article with some comments on the scalar-valued case.

Let $\lambda\in \mathbb{C}$. Given 
$f \in C^\infty(\mathbb{R}^2\setminus \{(0,0)\})
\simeq C^\infty(\mathbb{C} \setminus \{0\})$,
we define its \emph{twisted inversion} $f^\vee_\lambda$ by
\begin{equation*}
f^\vee_\lambda(r\cos \theta, r\sin \theta)
:=r^{-2\lambda}f\bigg(\frac{-\cos \theta}{r}, \frac{\sin \theta}{r}  \bigg)
\end{equation*}
as in \eqref{eqn:invF}, and more generally,
\begin{equation*}
f^h_\lambda(z):=
|cz+d|^{-2\lambda} f\left(\frac{az+b}{cz+d}\right) 
\quad
\text{for $h^{-1} = \begin{pmatrix} a & b\\ c & d \end{pmatrix} \in SL(2,\mathbb{C})$}
\end{equation*}
as in \eqref{eqn:Fhl}. 

For a differential operator $\mathcal{D}$ on $\mathbb{R}^2$,
we define a linear operator
$\tilde{\mathcal{D}} : C^\infty(\mathbb{R}^2) \to C^\infty(\mathbb{R})$
by
\begin{equation*}
\tilde{\mathcal{D}} := \Rest{y=0} \circ \mathcal{D}.
\end{equation*}

Fix $\lambda, \nu \in \mathbb{C}$.
As in Questions 
\ref{quest:A},
\ref{quest:A1},
\ref{quest:A2}, and
\ref{quest:Verma},
we may consider the following
equivalent questions:

\begin{question}\label{quest:B}
Find $\tilde{\mathcal{D}}$ with constant coefficients such that
\begin{equation*}
\left(\tilde{\mathcal{D}} f^\vee_\lambda\right)(x)
=|x|^{-2\nu} (\tilde{\mathcal{D}}f)\left(-\frac{1}{x}\right)
\quad
\text{for all $f \in C^\infty(\mathbb{R}^2)$ and $x \in \mathbb{R}^\times$.}
\end{equation*}
\end{question}

\begin{pquestion}\label{quest:B'}
Find $\tilde{\mathcal{D}}$ such that
\begin{equation*}
\left(\tilde{\mathcal{D}} f^h_\lambda\right)(x) =
|cx+d|^{-2\nu}(\tilde{\mathcal{D}}f)
\bigg(\frac{ax+b}{cx+d}\bigg)
\quad
\text{for all $f \in C^\infty(\mathbb{C}), 
h \in SL(2,\mathbb{R})$, 
and $x \in \mathbb{R}^\times$.}
\end{equation*}
\end{pquestion}

\begin{ppquestion}\label{quest:B''}
Find an explicit formula of 
conformally covariant differential operator 
$\mathcal{E}_\lambda(S^2) \to \mathcal{E}_\nu(S^1)$.
\end{ppquestion}

\begin{pppquestion}\label{quest:B'''}
Find an explicit expression of the element $\varphi(\mathbf{1}_{-\nu})$
for  any $\varphi \in  \operatorname{Hom}_\mathfrak{g} \left(
M(-\nu), M(-\lambda) \otimes M(-\lambda) \right)$,
where $\mathfrak{g} = \mathfrak{sl}(2, \mathbb{C})$.
\end{pppquestion}

An answer to Question \ref{quest:B''}
(and also in the case $S^{n-1} \subset S^n$
for arbitrary $n \geq 2$)
was first given by
Juhl \cite{Juhl}. 
In the flat model (Questions \ref{quest:B} and \ref{quest:B'}),
if $a : = \nu-\lambda \in \mathbb{N}$
then
\begin{equation*}
\widetilde{\mathcal C{}^{\lambda-\frac{1}{2}}_a}
\equiv\Rest{y=0} \circ C^{\lambda-\frac{1}{2}}_a\bigg(-\frac{\partial^2}{\partial x^2},
\frac{\partial}{\partial y}\bigg)
:\mathcal{E}_\lambda(\mathbb{R}^2) \to
\mathcal{E}_{\nu}(\mathbb{R}^1)
\end{equation*}
intertwines the $SL(2,\mathbb{R})$-action.
There have been several proofs for this 
(and also for more general cases) based on:

\begin{itemize}
\item Recurrence relations among coefficients of $\mathcal{D}$ (\cite{Juhl}),
\item F-method (\cite{K13, KOSS, KP}), and
\item Residue \text{formul\ae} of a meromorphic family of non-local
symmetry breaking operators \cite{Kob14a, KS}.
\end{itemize}

The holomorphic trick in Section \ref{sec:htrick} applied to this 
case gives yet another proof by using the Rankin--Cohen brackets
and the following proposition analogous to 
(and much simpler than) Proposition \ref{prop:Jacob}.

\begin{proposition}
For $a \in \mathbb{N}$, we have 
\begin{align*}
(1-z)^aP^{\lambda-1, -2\lambda-2a+1}_a
\bigg(\frac{3+z}{1-z}\bigg)
=(-1)^a
\frac{\left(\lambda+ [\frac{a}{2}] \right)_{[\frac{a+1}{2}]}}
{\left(\lambda - \frac{1}{2} \right)_{[\frac{a+1}{2}]}}
C^{\lambda-\frac{1}{2}}_{a}(z).
\end{align*}
Equivalently,
\begin{align*}
P^{\lambda-1, -2\lambda-2a+1}_a(x-\sqrt{-1}y, x+\sqrt{-1}y)
=(\sqrt{-1})^a 
\frac{\left(\lambda+ [\frac{a}{2}] \right)_{[\frac{a+1}{2}]}}
{\left(\lambda - \frac{1}{2} \right)_{[\frac{a+1}{2}]}}
C^{\lambda-\frac{1}{2}}_a (-x^2, y).
\end{align*}
\end{proposition}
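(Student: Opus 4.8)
The plan is to reduce both displayed formulas to the single one-variable identity in $z$, and to prove that identity by converting the Jacobi and Gegenbauer polynomials into terminating hypergeometric series and then applying a Pfaff transformation, in the spirit of (but more simply than) Proposition~\ref{prop:Jacob}. First I would observe that the two statements are equivalent by homogeneity: both sides of the second identity are homogeneous of degree $a$ in $(x,y)$, so it suffices to check it at $x=1$. Setting $z=-\sqrt{-1}\,y$ gives $1-z=x+\sqrt{-1}y$ and $\frac{3+z}{1-z}=2\frac{x-\sqrt{-1}y}{x+\sqrt{-1}y}+1$, which is precisely the inflation $P^{\alpha,\beta}_a(X,Y)=Y^aP^{\alpha,\beta}_a(2X/Y+1)$ evaluated at $X=x-\sqrt{-1}y$, $Y=x+\sqrt{-1}y$; likewise $C^{\lambda-1/2}_a(-1,y)=(\sqrt{-1})^aC^{\lambda-1/2}_a(z)$, and together with the prefactor $(\sqrt{-1})^a$ this accounts exactly for the sign $(-1)^a$. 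Hence it is enough to prove the one-variable form.

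Next I would write the Jacobi polynomial in hypergeometric form,
\[ P^{\lambda-1,-2\lambda-2a+1}_a(t)=\binom{a+\lambda-1}{a}\,{}_2F_1\!\left(-a,\,-\lambda-a+1;\,\lambda;\,\tfrac{1-t}{2}\right), \]
and substitute $t=\frac{3+z}{1-z}$, so that $\frac{1-t}{2}=-\frac{1+z}{1-z}$. The Pfaff transformation ${}_2F_1(a',b';c';w)=(1-w)^{-a'}\,{}_2F_1(a',c'-b';c';\tfrac{w}{w-1})$ with $a'=-a$, $b'=-\lambda-a+1$, $c'=\lambda$, using $1-w=\frac{2}{1-z}$ and $\frac{w}{w-1}=\frac{1+z}{2}$, turns the right-hand side into $\frac{2^a}{(1-z)^a}\,{}_2F_1(-a,\,2\lambda+a-1;\,\lambda;\,\frac{1+z}{2})$. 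Multiplying by $(1-z)^a$ cancels the singular factor, and by the Gegenbauer representation
\[ C^{\lambda-1/2}_a(z)=\tfrac{(2\lambda-1)_a}{a!}\,{}_2F_1\!\left(-a,\,a+2\lambda-1;\,\lambda;\,\tfrac{1-z}{2}\right), \]
together with the parity $C^{\lambda-1/2}_a(-z)=(-1)^aC^{\lambda-1/2}_a(z)$, the surviving ${}_2F_1$ is a constant multiple of $C^{\lambda-1/2}_a(z)$. Since all the series terminate, these are identities of polynomials and no convergence question arises.

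It then remains to match the scalar constant. The steps above produce the factor $(-1)^a\,2^a\frac{(\lambda)_a}{(2\lambda-1)_a}$, and I would identify it with $\frac{(\lambda+[a/2])_{[(a+1)/2]}}{(\lambda-1/2)_{[(a+1)/2]}}$ by means of the duplication formula $(2y)_a=2^a(y)_{\lceil a/2\rceil}(y+\tfrac12)_{\lfloor a/2\rfloor}$ applied with $y=\lambda-\tfrac12$, followed by the splitting $(\lambda)_a=(\lambda)_{\lfloor a/2\rfloor}(\lambda+\lfloor a/2\rfloor)_{\lceil a/2\rceil}$. The main---and essentially only---obstacle is the even/odd Pochhammer bookkeeping in this last step: one must track $\lceil a/2\rceil=[(a+1)/2]$ and $\lfloor a/2\rfloor=[a/2]$ carefully so that the surviving factors reproduce precisely the floors appearing in the statement. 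Everything else is a direct substitution, which is exactly why this proposition is simpler than Proposition~\ref{prop:Jacob}, where the vector-valued case forces the extra Gegenbauer term governed by $B_a(\lambda)$.
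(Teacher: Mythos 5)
Your proof is correct, but there is nothing in the paper to compare it against: the paper states this proposition with no proof at all, remarking only that it is the (much simpler) scalar analogue of Proposition \ref{prop:Jacob}, which is itself left unproved there (an F-method proof is deferred to a subsequent paper). Your classical hypergeometric argument therefore supplies precisely the verification the paper omits, and its checkpoints are all sound: the paper's series for $P^{\alpha,\beta}_\ell$ agrees with the standard normalization, so $P^{\lambda-1,-2\lambda-2a+1}_a(t)=\tfrac{(\lambda)_a}{a!}\,{}_2F_1\bigl(-a,\,1-\lambda-a;\,\lambda;\,\tfrac{1-t}{2}\bigr)$; for $t=\tfrac{3+z}{1-z}$ the argument becomes $w=-\tfrac{1+z}{1-z}$, and Pfaff (a terminating, hence polynomial, identity) with $1-w=\tfrac{2}{1-z}$, $\tfrac{w}{w-1}=\tfrac{1+z}{2}$, $c'-b'=2\lambda+a-1$ yields $(1-z)^aP^{\lambda-1,-2\lambda-2a+1}_a\bigl(\tfrac{3+z}{1-z}\bigr)=\tfrac{2^a(\lambda)_a}{a!}\,{}_2F_1\bigl(-a,\,2\lambda+a-1;\,\lambda;\,\tfrac{1+z}{2}\bigr)$, which the Gegenbauer representation plus parity converts to $(-1)^a2^a\tfrac{(\lambda)_a}{(2\lambda-1)_a}\,C^{\lambda-\frac12}_a(z)$; and your duplication bookkeeping is right, since $(2\lambda-1)_a=2^a\bigl(\lambda-\tfrac12\bigr)_{[\frac{a+1}{2}]}(\lambda)_{[\frac{a}{2}]}$ and $(\lambda)_a=(\lambda)_{[\frac{a}{2}]}\bigl(\lambda+[\tfrac{a}{2}]\bigr)_{[\frac{a+1}{2}]}$ collapse the constant to the stated ratio. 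The homogeneity reduction of the two-variable form to $x=1$, $z=-\sqrt{-1}\,y$ is also correct. Two points worth making explicit in a final write-up: first, the intermediate steps require $\lambda$ generic (the lower parameter $\lambda$ must avoid $-\mathbb{N}$ and one divides by $(2\lambda-1)_a$), so one should note that both sides of the final identity are polynomials in $z$ whose coefficients are rational, indeed polynomial, in $\lambda$, whence the identity extends from generic to all $\lambda$ --- the same Zariski-density argument the paper uses in Section \ref{subsec:proof}; second, an alternative, more structural proof in the paper's spirit would combine Lemma \ref{lem:RCJacobi} and the holomorphic trick (showing $\Rest{y=0}\circ P^{\lambda-1,-2\lambda-2a+1}_a\bigl(\tfrac{\partial}{\partial x}-\sqrt{-1}\tfrac{\partial}{\partial y},\tfrac{\partial}{\partial x}+\sqrt{-1}\tfrac{\partial}{\partial y}\bigr)$ is a scalar symmetry breaking operator) with Juhl's theorem and generic multiplicity one to get proportionality; your computation has the advantage of being self-contained and of producing the proportionality constant directly rather than by a separate comparison of coefficients.
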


\subsection*{Acknowledgments}
The first author warmly thanks Professor Vladimir Dobrev for his
hospitality during the tenth International Workshop: Lie Theory and its Applications
in Physics in Varna, Bulgaria, 17-23 June 2013. 
Authors were partially supported by CNRS, 
the FMSP program at 
the Graduate School of Mathematical Sciences
of the University of Tokyo, and 
Japan Society for the Promotion of Sciences through
Grant-in-Aid for Scientific Research (A)
(25247006) and Short-Term Fellowship (S13024).


\begin{thebibliography}{99}

\bibitem{C75}
H. ~Cohen, 
\emph{Sums involving the values at negative integers of L-functions of quadratic
characters}, Math. Ann. {\bf{217}}, (1975), 
\href{http://dx.doi.org/10.1007/BF01436180}{ 271--285}.

\bibitem{EZ85}
M.~Eichler, D.~Zagier,
\emph{The theory of Jacobi forms},
Progr. Math., 
{\bf{55}}. Birkh\"auser, 
1985.

\bibitem{Juhl}
A. Juhl, 
\emph{ Families of conformally covariant differential operators, $Q$-curvature and holography.} 
Progr. Math., 
\href{http://link.springer.com/book/10.1007/978-3-7643-9900-9/page/1}
{\bf{275}}. Birkh\"auser, 
2009. 

\bibitem{K12} 
T.~Kobayashi, 
\emph{Restrictions of generalized Verma modules to symmetric 
pairs}, 
Transform. Group, 
{\bf{17}},
 (2012)
\href{http://dx.doi.org/10.1007/s00031-012-9180-y}{523--546}.

\bibitem{K13}
T.~Kobayashi,
\emph{$F$-method for constructing equivariant differential operators},
Geometric analysis and integral geometry, 139--146,
Contemp. Math., \textbf{598}, Amer. Math. Soc., Providence, RI, 2013. 

\bibitem{Kob14a}
T. ~Kobayashi,
\emph{F-method for symmetry breaking operators},
Differential Geom. Appl. 
\textbf{33} (2014) 
\href{http://dx.doi.org/10.1016/j.difgeo.2013.10.003}
{272--289}.

\bibitem{xkors}
T.~Kobayashi, B.~{\O}rsted,
\emph{{Analysis on the
  minimal representation
  of\/ {${\rm O}(p,q)$}.
 {\rm{Part I,}} 
}}
Adv. Math.,
\textbf{180}, (2003) 
\href{http://dx.doi.org/10.1016/S0001-8708(03)00012-4}{486--512};
{\rm{Part II}},
{\it{ibid}}, 
\href{http://dx.doi.org/10.1016/S0001-8708(03)00013-6}{513--550};
{{\rm{Part III}}}, 
{\it{ibid}},
\href{http://dx.doi.org/10.1016/S0001-8708(03)00014-8}{551--595}.

\bibitem{KOSS}
T.~Kobayashi, B.~{\O}rsted,
P.~Somberg, V.~Sou\v{c}ek, 
\emph{Branching laws for Verma modules and applications in
parabolic geometry}, Part I, preprint, 37 pages,
\href{http://arxiv.org/abs/1305.6040}{arXiv:1305.6040};
Part II, in preperation.

\bibitem{KP}
T.~Kobayashi, M.~Pevzner,
\emph{Rankin--Cohen operators for symmetric pairs},
preprint, 53pp.
\href{http://arxiv.org/abs/1301.2111}{arXiv:1301.2111}. 
 
\bibitem{KS}
T.~Kobayashi, B.~Speh,
\emph{Symmetry breaking for representations  of rank
one orthogonal groups}, 131pp.
 to appear in Memoirs of Amer. Math. Soc.
\href{http://arxiv.org/abs/1310.3213}{arXiv:1310.3213}.

\bibitem{Ra56} 
R. A. ~Rankin, 
\emph{The construction of automorphic forms from the derivatives of a given
form}, J. Indian Math. Soc. $\mathbf{20}$ (1956), pp. 103--116.


\end{thebibliography}
\end{document}